\documentclass[10pt]{amsart}
 \usepackage[utf8]{inputenc}
 \usepackage[margin=1.3in]{geometry}
 \usepackage{url,float,tikz,graphicx,tikz-cd}
 \usepackage{amsmath,amsthm,amsfonts,amssymb,amscd,latexsym,bm,enumitem,mathrsfs,mathtools}
\usepackage[english]{babel}
 \usepackage{csquotes}


\usepackage{eepic,epsfig,eufrak}
\usepackage[english]{babel}
\usepackage{hyperref}

\theoremstyle{plain}
\newtheorem{theorem}[equation]{Theorem}
\newtheorem{lemma}[equation]{Lemma}
\newtheorem{proposition}[equation]{Proposition}
\newtheorem{proposition-definition}[equation]{Proposition-Definition}

\theoremstyle{definition}
\newtheorem{definition}[equation]{Definition}
\newtheorem{example}[equation]{Example}
\newtheorem{remark}[equation]{Remark}
\newtheorem{block}[equation]{}


\newcommand{\br}{\mathrm{Br}}
\newcommand{\an}{\mathrm{an}}

\renewcommand{\log}{\mathrm{log}}
\newcommand{\ord}{\mathrm{ord}}

\newcommand{\Hom}{\mathrm{Hom}}

\newcommand{\add}[1]{\add \emph{#1}}

\newcommand{\val}{\mathrm{val}}
\renewcommand{\div}{\mathrm{div}\,}

\newcommand{\red}{\mathrm{red}}

\newcommand{\Gal}{\mathrm{Gal}}
\newcommand{\wt}{\mathrm{wt}}
\renewcommand{\sp}{\mathrm{sp}}

\newcommand{\coker}{\mathrm{coker}}

\newcommand{\sk}{\mathrm{sk}}

\renewcommand{\br}{\mathrm{Br}}
\renewcommand{\red}{\mathrm{red}}
\newcommand{\lra}{\longrightarrow}

\newcommand{\colim}{\mathrm{colim}}

\renewcommand{\P}{\mathbb{P}}
\newcommand{\Q}{\mathbb{Q}}

\newcommand{\R}{\mathbb{R}}

\newcommand{\Z}{\mathbb{Z}}

\renewcommand{\O}{\mathscr{O}}
\renewcommand{\H}{\mathscr{H}}

\newcommand{\CC}{\mathscr{C}}

\newcommand{\M}{\mathscr{M}}
\newcommand{\YY}{\mathscr{Y}}

\newcommand{\ZZ}{\mathscr{Z}}

\newcommand{\leaf}{\mathrm{leaf}}

\newcommand{\mult}{\mathrm{mult}}

\renewcommand{\add}[1]{$\bigstar\bigstar\bigstar\bigstar\bigstar$\emph{\footnotesize{#1}}}
\numberwithin{equation}{section} 

\begin{document}
\title[The different for base change of arithmetic curves]{the different for base change of arithmetic curves}

\date{\today}
\author[Art Waeterschoot]{Art Waeterschoot}
\address{KU Leuven\\
Department of Mathematics\\
Celestijnenlaan 200B\\3001 Heverlee \\
Belgium}
\email{art.waeterschoot@kuleuven.be}
\urladdr{\url{https://sites.google.com/view/artwaeterschoot}}
\subjclass[2010]{Primary 14G22; secondary 14D10, 14E22} 
\thanks{The author was supported by the Fund for Scientific Research Flanders (PhD Fellowship 11F0123N and Research Project G0B1721N). This work is part of the author's PhD thesis supervised by Johannes Nicaise, who suggested to look into \ref{thm: ell curve}. I thank him and also Stefan Wewers, Lars Halvard Halle, Dennis Eriksson, Michael Temkin, Micha\"el Maex for stimulating discussions.}
\keywords{arithmetic curves, different function, Berkovich skeleta, wild quotient singularities}
\begin{abstract} We introduce a method for studying reduction types of arithmetic curves and wildly ramified base change. We give new proofs of earlier results of Lorenzini \cite{L10} and Obus-Wewers \cite{OW}, and resolve a question of Lorenzini \cite{L14} on the Euler characteristic of the resolution graph of a $p$-cyclic arithmetic surface quotient singularity. Our method consists of constructing a simultaneous skeleton for the associated cover of Berkovich analytifications and applying the skeletal Riemann-Hurwitz formula of \cite{W}. \end{abstract}

\maketitle

\tableofcontents
\addtocontents{toc}{\protect\setcounter{tocdepth}{1}}

\setcounter{tocdepth}{1}
\section{Introduction}
\label{sec: intro}
\begin{block}[Aim]
The objective of this paper is to complement the work \cite{W} with applications and more explicit results in the case of curves. The main new result is Theorem \ref{thm: combinatorial slope}. 
\end{block}
\begin{block}[Notation] We fix the following notation.\label{notation}
		\begin{enumerate}[label=(\roman*)]
			\item Let $k$ denote a henselian discretely valued field with normalised valuation $v_k$, ring of integers $k^{\circ}$ and algebraically closed ground field $\tilde{k}$ of characteristic $p$. 
			\item Let $C$ denote a smooth projective geometrically connected algebraic $k$-curve.
			\item A \emph{$k^{\circ}$-model of $C$} is a proper normal flat $k^{\circ}$-scheme $\CC$ equipped with a generic fiber isomorphism $\CC_\eta\cong C$.  A $k^{\circ}$-model $\CC$ is equipped with the ``standard'' logarithmic structure $\CC^{\dagger}\coloneqq (\CC,\M_{\CC})$ where $\M_{\CC}$ is the sheaf of functions invertible on the generic fiber. We call a $k^{\circ}$-model $\CC$ \emph{toroidal} if the log-scheme $\CC^{\dagger}$ is logarithmically regular \cite{K94}. By op.cit. \S11.6 regular toroidal $k^{\circ}$-models coincide with \emph{snc} $k^{\circ}$-models, defined as regular $k^{\circ}$-models $\CC$ for which the reduced special fiber $\CC_{s,\red}$ is an \emph{snc} divisor. We prefer to work with the larger class of toroidal models because these also admit a theory of skeleta and minimal models and they behave better with respect to base change (see Section \ref{sec: models}). 
			\item Given a toroidal $k^{\circ}$-model $\CC$ we write $\Gamma(\CC)$ for the graph whose vertices are the generic points of $\CC_s$ and edges correspond to non-empty intersections. To be precise, for a vertex $x\in V(\Gamma(\CC))$ we write $C_x\coloneqq\overline{\{x\}}$ for the corresponding component of $\CC_{s}$ and we draw an edge from $x$ to $x'$ if and only if $C_x\cap C_{x'}\ne \emptyset$. The sheaf $\M_{\CC}^{\sharp}$ of toric coordinates on $\CC$ equips $\Gamma$ with a canonical metric (see \ref{definition skeleton}) called the \emph{potential} metric \cite[\S2]{BN} or \emph{conformal} metric \cite[\S4]{W}, beware it is not stable under base change.
		\end{enumerate}
\end{block}
\begin{block}[Skeleta] For every toroidal $k^{\circ}$-model there exists a canonical topological embedding $\Gamma(\CC) \hookrightarrow C^{\an}$ where $C^{\an}$ is the Berkovich $k$-analytification of $C$. In fact $\Gamma(\CC)$ is a strong deformation retract of $C^{\an}$, therefore any subspace $\Gamma$ of the form $\Gamma(\CC)$ will be called a \emph{skeleton}. The \emph{tropical canonical divisor} $K_{\Gamma}$ of $\Gamma$ is defined as a formal linear combination of the vertices of $\Gamma$ with coefficient at $x\in V(\Gamma)$ equal to $$m(x)\left(\chi(x)-\mathrm{val}_{\Gamma}(x)\right);$$ here
\begin{enumerate}[label=(\roman*)]
 	\item $\chi(x)\coloneqq \chi(C_x)=2-2g(C_x)$ is the Euler characteristic of $C_x$.
 	\item $\mathrm{val}_\Gamma(x)$ is the valency of $x$ in $\Gamma$, i.e. the number of neighbouring components of $C_x$
 	\item $m(x)=e(\O_{\CC,x}/k^{\circ})=\mult_{C_x}\CC_s$ denotes the multiplicity of $C_x$.
 \end{enumerate} The definition of $K_{\Gamma}$ is equally valid for any \emph{subgraph} (assumed compact connected, see Definition \ref{subgraph}) $\Gamma$ of $C^{\an}$  and one defines $$\chi(\Gamma)\coloneqq\deg K_{\Gamma}.$$
\end{block}
\begin{theorem}[Enlarging skeleta]
 \label{thm: enlarging skeleta} Suppose that $\Gamma$ is a subgraph of $C^{\an}$ which contains a skeleton. Then $\Gamma$ is a skeleton if and only if $\chi(\Gamma)=\chi(C)$.
\end{theorem}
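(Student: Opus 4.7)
My plan is to prove the two directions separately, with the bulk of the effort going into the reverse implication.

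The forward direction amounts to the arithmetic adjunction formula. For a toroidal model $\CC$ with skeleton $\Gamma = \Gamma(\CC)$, I would sum the component-wise adjunction $2g(C_x) - 2 = (K_{\CC/k^{\circ}} + C_x)\cdot C_x$ weighted by $m(x)$ over the vertices of $\Gamma$, rewrite $\sum_x m(x)\val_\Gamma(x) = \sum_{e}(m(\partial_0 e) + m(\partial_1 e))$, and simplify using $\CC_s\cdot \CC_s = 0$ together with $K_{\CC/k^{\circ}}\cdot \CC_s = 2g(C) - 2$ to obtain $\chi(\Gamma(\CC)) = \chi(C)$. For the purely toroidal (not necessarily snc) case, the intersection numbers are interpreted via the log structure as set up in Section~\ref{sec: models}.

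For the reverse direction, assume $\Gamma \supseteq \Gamma_0 = \Gamma(\CC_0)$ satisfies $\chi(\Gamma) = \chi(C)$. I first enlarge $\Gamma$ to a skeleton: each vertex of $\Gamma$ is a divisorial valuation realized as the generic point of a component of some toroidal blowup of $\CC_0$, and a common toroidal refinement (with further blowups to align edges if necessary) produces a toroidal model $\CC_1$ with $\Gamma \subseteq \Gamma(\CC_1)$. The forward direction then gives $\chi(\Gamma(\CC_1)) = \chi(C) = \chi(\Gamma)$. I next study the overshoot $\Delta = \Gamma(\CC_1)\setminus \Gamma$ with closed connected components $\bar\Delta_j$. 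Since $\Gamma$ is sandwiched between the two skeleta $\Gamma_0$ and $\Gamma(\CC_1)$ it has first Betti number $h_1(C^{\an})$, so Mayer--Vietoris forces each $\bar\Delta_j$ to be a tree attached to $\Gamma$ at a single vertex $x_j$. The Euler-characteristic equality decomposes as $\sum_j X_j = 0$ with
\[
X_j = \sum_{y\in V(\Delta_j)} m(y)\chi(C_y) - \sum_{e \in E(\bar\Delta_j)}\bigl(m(\partial_0 e) + m(\partial_1 e)\bigr).
\]

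The hard part will be the sign analysis: I claim each $X_j \le 0$, with equality precisely when $\bar\Delta_j$ is a ``contractible'' toroidal configuration whose collapse onto $x_j$ realizes $\Gamma$ as a skeleton of a contracted model. I would attack this inductively by peeling off leaves of $\bar\Delta_j$. A positive-genus leaf $y$ contributes strictly negative, since $\chi(C_y) \le 0$ while its edge contributes $m(y) + m(y') > 0$ with $y'$ its unique neighbor. A rational leaf $y$ with neighbor $y'$ contributes $m(y) - m(y')$, whose sign is controlled by the balancing relation $m(y)C_y^2 + \sum_z m(z)(C_y\cdot C_z) = 0$ coming from $\CC_{1,s}\cdot C_y = 0$ and which should recover the standard toroidal contraction criterion for $C_y$. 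Propagating the induction down the tree, $X_j = 0$ forces $\bar\Delta_j$ to consist entirely of rational components whose multiplicities assemble into a toroidal contraction onto $x_j$. Given the sign, $\sum_j X_j = 0$ forces every $X_j$ to vanish, and contracting all $\bar\Delta_j$ simultaneously in $\CC_1$ produces a toroidal model $\CC$ with $\Gamma(\CC) = \Gamma$, as required.
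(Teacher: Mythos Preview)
Your forward direction is the same as the paper's (the paper just cites the intersection-theoretic computation in \cite[Lemma 3.1.2]{N12}).

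For the reverse direction your outline is workable but substantially more complicated than the paper's, and the equality case is not fully nailed down. The paper's key simplification is to take $\CC_1$ to be the \emph{minimal} snc model with $\Gamma\subset\sk(\CC_1)$. Using the factorisation of $\CC_1\to\CC_0$ into point blowups together with minimality, one sees immediately that every node of $\sk(\CC_1)$ already lies in $\Gamma$ and is not a leaf of $\Gamma$; hence the overshoot is a disjoint union of \emph{intervals} $[x,\rho_\Gamma(x)]$ indexed by leaves $x$ of $\sk(\CC_1)$, not general trees. The single inequality needed is then $m(\rho_\Gamma(x))\ge m(x)$, which follows because $\rho_\Gamma(x)$ can be reached from $x$ by nodal blowups (which only increase multiplicity). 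Summing gives $\chi(\Gamma)-\chi(\sk(\CC_1))=\sum_x\bigl(m(\rho_\Gamma(x))-m(x)\bigr)\ge 0$, and in the equality case every leaf has $m(x)=m(\rho_\Gamma(x))$, which by minimality forces $\Gamma=\sk(\CC_1)$ directly. No tree induction, no contraction argument.

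Your route via an arbitrary $\CC_1$, tree-shaped $\bar\Delta_j$, and leaf-peeling does give $X_j\le 0$ (your balancing argument for a rational leaf $y$ correctly yields $C_y^2=-m(y')/m(y)\le -1$, hence $m(y)\le m(y')$). But your treatment of the equality case is where the real content hides: ``multiplicities assemble into a toroidal contraction'' is not a proof. What actually happens when $X_j=0$ is that every leaf at every stage of the peeling satisfies $m(y)=m(y')$ and $g(y)=0$, hence $C_y^2=-1$; so you must iteratively contract $(-1)$-curves (checking the result stays snc each time) until the overshoot disappears. Equivalently, once you know $X_j\le 0$ in general, you can pass to the minimal $\CC_1$ where the overshoot has no $(-1)$-leaf, forcing strict inequality unless $\Delta_j=\emptyset$. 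Either way you end up invoking minimality, which is why the paper builds it in from the start.
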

\begin{remark} For instance, the interval $I=\{\eta_{0,r}:r\in[0,1/2]\}$ of $(\P^1)^{\an}$ is not a skeleton because $\chi(I)=3\ne 2$.
		Theorem \ref{thm: enlarging skeleta} contrasts with the situation over algebraically closed fields, where the condition on $\chi$ is not required, see \cite[\S3.5.4]{CTT}. 
\end{remark}
\begin{block}[Simultaneous skeleta and Riemann-Hurwitz]\label{intro diff}
Now let $C'=C\otimes_kk'$ denote the base change of $C$ to a finite seperable extension $k'/k$. Denote by $\pi:C'\to C$ the projection. Let $\CC$ be a toroidal $k^{\circ}$-model of $C$ and let $\CC'$ denote the normalisation of $\CC\otimes_{k^{\circ}}(k')^{\circ}$. Write $\Gamma'=(\pi^{\an})^{-1}(\Gamma)$. Assume that the conclusion of \ref{thm: enlarging skeleta} holds for $\Gamma'$, then $\CC'$ is a toroidal $(k')^{\circ}$-model with skeleton $\Gamma'$. We will also call $\Gamma'\to\Gamma$ a simultaneous skeleton of $C'^{\an}\to C^{\an}$. The following two results where shown in \cite{W}, see Section \ref{sec: bc} for details:
\begin{enumerate}[label=(\roman*)]
	\item 
First, $\Gamma'\to\Gamma$ is a \emph{harmonic} cover of metric graphs, meaning that for each edge $e$ of $\Gamma$ we have $$[k':k]=\sum_{e'/e} \mathrm{length}(e)/\mathrm{length}(e')$$ summing over edges $e'$ of $\Gamma'$ above $e$. 
\item Secondly let $\delta:\Gamma'\to\R_{\ge0}$ denote the \emph{different function}; it can be defined as the unique continuous function which is linear on the edges of $\Gamma'$ such that at a vertex $x'\in V(\Gamma')$ we have $$\delta(x')
=\frac{1}{m(x')}\mathrm{length}_{\O_{\CC',x'}}\delta^{\log}_{\O_{\CC',x'}/\O_{\CC,x}}$$ where $\delta^{\log}$ is the additive log-different (see \ref{sec: diff}). The combinatorial Laplacian $\Delta(\delta)$ is the combinatorial divisor whose coefficients are the sum of outgoing slopes. Then we have the Riemann-Hurwitz formula
$$\Delta(\delta)=K_{\Gamma'}-\pi^*K_{\Gamma}.$$
\end{enumerate}
\end{block}
\begin{block}[Complements on the different function]
Additionally, we have the following properties of the different function $\delta:\Gamma'\to\R_{\ge0}$:
\begin{enumerate}[label=(\roman*)]
	\item For all $x'\in V(\CC')$ we have $\delta(x')=0$ if and only if $\O_{\CC',x'}/\O_{\CC,x}$ is tamely ramified (Proposition \ref{propn: (log) differents} (e)).
	\item Let $\Gamma^{t}$ denote the \emph{temperate} part of $\Gamma$, this is the closure of the tame divisorial points on $\Gamma$ \cite{JN}. Then $\delta$ is constant on $(\Gamma^t)'$, with value equal to $\frac{1}{[k':k]}\delta^{\log}_{k'/k}$ (Proposition \ref{prop: temp})
	\item Suppose $\kappa(x')/\kappa(x)$ is seperable, then the slope $\partial_{b'}\delta$ of $\delta$ at along a branch $b'$ at $x'$ is equal to $m(x')\delta^{\log}_{\O_{C_{x'},b'}/\O_{C_{x},b}}$ (Proposition \ref{comparison RH}).
\item The slopes of $\delta$ are integers
\item $\delta$ is bounded by $v_k[k':k]$ (see \ref{bounds different}). 
\end{enumerate}
Only properties (i)-(iii) will be used in the applications.
\end{block}
\begin{block}[Application: the 'base change' method]
In the applications we obtain information on the dual graph $\Gamma$ in some situations. The idea of the method is to apply the properties of the different for a base change $\Gamma'\to\Gamma$. The first application is a computation-free proof of the following Theorem of Lorenzini \cite[2.8]{L10}.
\end{block}
\begin{theorem}[Potentially multiplicative elliptic curves] \label{thm: ell curve} Let $E/k$ be an elliptic curve with bad reduction and potentially multiplicative reduction. Write $\nu=-\ord_kj(E)$. Then $E$ has multiplicative reduction over an extension $k'/k$ of degree $2$ and $E$ has Kodaira-N\'eron reduction type $I^*_{\nu+4\delta^{\log}_{k'/k}}$ and $E_{k'}$ has Kodaira-N\'eron reduction type $I_{2\nu}$. 
\end{theorem}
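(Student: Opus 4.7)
The plan is to apply the base change method of \ref{intro diff} with $k'/k$ the (ramified) quadratic extension over which $E$ acquires multiplicative reduction, and to read the Kodaira type of $E$ from the simultaneous skeleton $\Gamma'\to\Gamma$ using properties (i)--(iii) of the different function. The classical theory of twists of potentially multiplicative elliptic curves produces such a quadratic $k'$; it must be ramified because $E$ has bad reduction over $k$, so $e(k'/k)=2$ and $\ord_{k'}j(E)=-2\nu$. By Tate's uniformisation $E_{k'}$ has Kodaira type $I_{2\nu}$, establishing the last two assertions.

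Let $\CC'$ be the minimal SNC model of $E_{k'}$, so $\Gamma':=\Gamma(\CC')$ is a topological circle of length $2\nu$ with $2\nu$ vertices, each of multiplicity one. The Galois involution $\sigma$ acts on $\Gamma'$ by isometries. It cannot act as a rotation, for then the quotient $E^{\an}=E_{k'}^{\an}/\sigma$ would inherit a circle skeleton and $E$ would be multiplicative; hence $\sigma$ is an isometric reflection with two antipodal fixed points. The quotient $\Gamma:=\Gamma'/\sigma$ is a metric interval, and $\pi:\Gamma'\to\Gamma$ is the simultaneous skeleton associated to a toroidal $k^{\circ}$-model $\CC$ of $E$ (essentially the normalisation of $\CC'/\sigma$). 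The relation $e(y'/y)\cdot m(y)=[k':k]\cdot m(y')$ combined with harmonicity of $\pi$ forces multiplicity~$1$ at the two endpoint vertices of $\Gamma$ (images of $\sigma$-fixed components) and multiplicity~$2$ at the remaining vertices (coming from $\sigma$-orbits of non-fixed vertices of $\Gamma'$).

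Next I apply the skeletal Riemann--Hurwitz formula $\Delta(\delta)=K_{\Gamma'}-\pi^{*}K_{\Gamma}$: one computes $K_{\Gamma'}=0$ directly (each vertex of $\Gamma'$ contributes $1\cdot(2-2)=0$), which together with property (ii) (stating $\delta\equiv\delta^{\log}_{k'/k}/[k':k]$ on the temperate part) and property (iii) (the slope formula at branches) determines $K_{\Gamma}$ and the behaviour of $\delta$ near the two endpoint vertices of $\Gamma$. From this combinatorial data the structure of $\CC$ is read off, and passing to the minimal regular model of $E$ amounts to resolving the quotient singularities concentrated on the two endpoint components of $\CC'_s$.

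The main obstacle is the wild case $p=2$. In the tame case $p\ne 2$ one has $\delta^{\log}_{k'/k}=0$ and the fixed locus of $\sigma$ on $\CC'$ consists of four smooth points producing classical $A_1$-singularities in the quotient; each is resolved by a single $(-2)$-curve, which furnishes the four multiplicity-one leaves of a standard $I^{*}_{\nu}$ fiber and yields $m=\nu$. In the wild case $p=2$ the four fixed loci are wild $\Z/2$-quotient singularities whose minimal resolutions introduce additional chains of components with multiplicities dictated by $\delta^{\log}_{k'/k}$; a careful accounting via the Riemann--Hurwitz formula shows the total contribution to the $I^{*}$-subscript is $4\delta^{\log}_{k'/k}$, yielding Kodaira type $I^{*}_{\nu+4\delta^{\log}_{k'/k}}$ as claimed.
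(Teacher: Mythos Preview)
Your approach reverses the direction of the paper's, and in the wild case this leaves a genuine gap. You push the circle skeleton $\Gamma'$ of $E_{k'}$ down to an interval $\Gamma=\Gamma'/\sigma$ and then want to invoke the skeletal Riemann--Hurwitz formula for $\Gamma'\to\Gamma$. But $\Gamma$ is \emph{not} a skeleton of $E^{\an}$: every skeleton satisfies $\chi=\chi(E)=0$, whereas your interval has $\chi(\Gamma)>0$. (Your endpoint multiplicities are also off: at a $\sigma$-fixed vertex $y'$ the residue extension $\widetilde{\H(y')}/\widetilde{\H(y)}$ is the degree-$2$ function-field extension of $C_{y'}\to C_{y'}/\sigma$, so $e(\H(y')/\H(y))=1$ and $m(y)=2$, not $1$.) Equivalently, when $p=2$ the quotient $\CC'/\sigma$ is not toroidal, so the hypotheses of \ref{ass sec bc} fail and Proposition~\ref{rh} does not apply to your pair. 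Your closing sentence (``a careful accounting via the Riemann--Hurwitz formula shows the total contribution to the $I^*$-subscript is $4\delta^{\log}_{k'/k}$'') therefore has no foundation: what it actually requires is an explicit resolution of the wild $\Z/2$ quotient singularities of $\CC'/\sigma$, which is precisely the computation the method is designed to bypass, and which you have not carried out. (A smaller issue: in characteristic $2$ an involution of $\P^1$ swapping two marked points has a \emph{single} fixed point, so your ``four fixed loci'' count is wrong in the wild case as well.)

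The paper goes in the opposite direction and needs no tame/wild case split. First, a weight-function argument together with inspection of the Kodaira--N\'eron table forces $E$ to have type $I_n^*$ for some $n$. One then takes $\Gamma$ to be the $I_n^*$ tree --- a genuine skeleton by construction --- and pulls back to $\Gamma'=(\pi^{\an})^{-1}\Gamma$; Theorem~\ref{thm: enlarging skeleta} is used to verify that $\Gamma'$ is a skeleton of $E_{k'}^{\an}$, so $\Gamma'\to\Gamma$ is an honest simultaneous skeleton. Now Propositions~\ref{rh} and~\ref{prop: temp} apply uniformly: $\delta$ is linear of slope $2$ from $0$ on the loop to $\tfrac12\delta^{\log}_{k'/k}$ on the temperate leaves, and a length comparison along the chain of multiplicity-$2$ vertices yields $n=\nu+4\delta^{\log}_{k'/k}$.
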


\begin{example}\label{example pot mult}
Let us give a concrete example supporting Theorem \ref{thm: ell curve}, illustrating the connection with the different function.
	Let $k=\Q_2$, $k'=\Q_2(\sqrt{5})$ and let $E$ be the elliptic curve with Legendre equation
$y^2=x(x-1)(x-2^5).$ We have $\nu=-\mathrm{ord}_kj(E)=2$ and $\delta^{\log}_{k'/k}=1$, and a computation via Tate's algorithm (implemented in SageMath) confirms that $E$ and $E_{k'}$ have Kodaira-N\'eron reduction type $I_5^*$ and $I_2$ respectively. Let $\Gamma$ be the minimal skeleton of $E$, then the proof of \ref{thm: ell curve} will show that $\Gamma'\to\Gamma$ is a simultaneous skeleton and the behaviour of $\delta:\Gamma'\to \R$ is depicted in Figure \ref{figure example pot mult} below. Also see Example \ref{example II} for a more involved example with potentially supersingular reduction.
\end{example}
\refstepcounter{equation}
 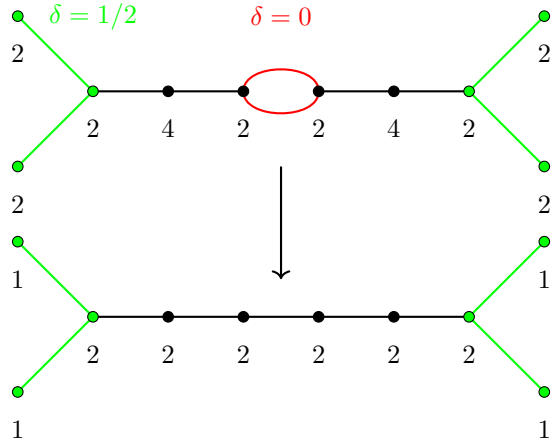
\begin{figure}[ht!]
\centering
	\begin{tikzpicture}
 \label{figure example pot mult}

\draw[thick,->] (0.5,2)--(0.5,0.5);

\node[green] at (-2,4) {$\delta=1/2$};
\node[red] at (0.5,4) {$\delta=0$};

\node at (-3,3.5) {$2$};
\node at (-3,1.5) {$2$};
\node at (-2,2.5) {$2$};
\node at (-1,2.5) {$4$};
\node at (0,2.5) {$2$};
\node at (1,2.5) {$2$};
\node at (2,2.5) {$4$};
\node at (3,2.5) {$2$};
\node at (4,1.5) {$2$};
\node at (4,3.5) {$2$};

\draw[thick] (-2,3) -- (0,3);
\draw[thick] (1,3) -- (3,3);
 \draw[red,thick] (0,3) to [out=90,in=90] (1,3);
 \draw[red,thick] (1,3) to [in=-90,out=-90] (0,3); 
\draw[green,thick] (4,4) -- (3,3)-- (4,2);
\draw[green, thick] (-3,4) -- (-2,3) --(-3,2);

\node at (-3,0.5) {$1$};
\node at (-3,-1.5) {$1$};
\node at (-2,-0.5) {$2$};
\node at (-1,-0.5) {$2$};
\node at (0,-0.5) {$2$};
\node at (1,-0.5) {$2$};
\node at (2,-0.5) {$2$};
\node at (3,-0.5) {$2$};
\node at (4,-1.5) {$1$};
\node at (4,0.5) {$1$};
\draw[thick] (-2,0) -- (3,0);
\draw[green,thick] (4,1) -- (3,0)-- (4,-1);
\draw[green, thick] (-3,1) -- (-2,0) --(-3,-1);

\draw[fill=green] (-3,4) circle (2pt);
\draw[fill=green] (-3,2) circle (2pt);
\draw[fill=green] (-2,3) circle (2pt);
\draw[fill=black] (-1,3) circle (2pt);
\draw[fill=black] (0,3) circle (2pt);
\draw[fill=black] (1,3) circle (2pt);
\draw[fill=black] (2,3) circle (2pt);
\draw[fill=green] (3,3) circle (2pt);
\draw[fill=green] (4,4) circle (2pt);
\draw[fill=green] (4,2) circle (2pt);

\draw[fill=green] (-3,1) circle (2pt);
\draw[fill=green] (-3,-1) circle (2pt);
\draw[fill=green] (-2,0) circle (2pt);
\draw[fill=black] (-1,0) circle (2pt);
\draw[fill=black] (0,0) circle (2pt);
\draw[fill=black] (1,0) circle (2pt);
\draw[fill=black] (2,0) circle (2pt);
\draw[fill=green] (3,0) circle (2pt);
\draw[fill=green] (4,1) circle (2pt);
\draw[fill=green] (4,-1) circle (2pt);

	\end{tikzpicture}
\caption{\small \emph{Picture of a simultaneous skeleton $\Gamma'\to\Gamma$ for Example \ref{example pot mult}. The markings are the multiplicities of the vertices. The different $\delta$ is constant on the coloured part of $\Gamma$ and varies linearly with slope $2$ from {\color{red}$\delta=0$} on the loop} to {\color{green}$\delta=1/2$} above $\Gamma^{\mathrm{t}}$}.
\end{figure}\label{supporting example}
\begin{block} Our second application concerns wild quotient singularities. Suppose $k'/k$ is a $p$-cyclic extension and assume $C'=C_{k'}$ admits a smooth $k^{\circ}$-model $\YY/k'^{\circ}$. That is, we assume that $C$ admits \emph{good reduction over $k'$}. Let $\ZZ=\YY/\mathrm{Gal}(k'/k)$, this is a singular $k^{\circ}$-model of $C$. The singular points of $\ZZ$ correspond to the branching points of the cover $\YY_s\to \ZZ_s$. Let $Q$ be a branch point, and let $P$ be the (unique) ramification point above $Q$. Let $\CC\to \ZZ$ be a resolution of $\ZZ$. Write $\Gamma_{Q}\in\pi_0(\Gamma\setminus\{x\})$ for the connected component of $\Gamma\setminus \{x\}$ corresponding to $Q$.\end{block}
\begin{theorem}\label{thm: combinatorial slope} Let $j_Q$ be the unique lower ramification jump of $\O_{\YY_s,P}/\O_{\ZZ_s,Q}$, then $$\chi(\Gamma_{Q})=1-(p-1)j_Q.$$
\end{theorem}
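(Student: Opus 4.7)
The plan is to apply the skeletal Riemann--Hurwitz formula of \cite{W} to a simultaneous skeleton $\pi \colon \Gamma' \to \Gamma$ for the cover $C'^{\an} \to C^{\an}$, and to extract $\chi(\Gamma_Q)$ from the local behaviour of the different $\delta$ at the main vertex. After possibly enlarging $\CC$ (and thereby $\CC'$) via Theorem~\ref{thm: enlarging skeleta}, we may assume $\Gamma' = \pi^{-1}(\Gamma)$; the main vertex $x \in V(\Gamma)$ then has a unique preimage $x' \in V(\Gamma')$ corresponding to the strict transform of $\YY_s$, with $m(x') = 1$ by smoothness of $\YY$.

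I would extract two local facts about $\delta$ at $x'$. First, because $\YY_s \to \ZZ_s$ is a nontrivial $\Z/p\Z$-Galois cover of smooth curves, the DVR extension $\O_{\CC',x'}/\O_{\CC,x} \cong \O_{\YY,\eta_{\YY_s}}/\O_{\ZZ,\eta_{\ZZ_s}}$ has ramification index $1$ and separable residue extension $k(\YY_s)/k(\ZZ_s)$ of degree $p$; by property~(i), $\delta(x')=0$. Second, the ramification point $P \in \YY_s$ above $Q$ defines the unique formal branch $b'$ of $\YY_s$ at $x'$ lying in $\Gamma'_Q \coloneqq \pi^{-1}(\Gamma_Q)$, and since $\kappa(x')/\kappa(x)$ is separable, property~(iii) computes the outgoing slope
\[\partial_{b'}\delta \;=\; m(x') \cdot \length\, \d^{\log}_{\O_{\YY_s,P}/\O_{\ZZ_s,Q}} \;=\; (p-1)\, j_Q,\]
using $m(x')=1$ and the standard formula $\length\, \d^{\log} = (p-1) j$ for a $\Z/p\Z$-extension of DVRs with unique lower break $j$.

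Finally, I would apply $\Delta(\delta) = K_{\Gamma'} - \pi^*K_\Gamma$ summed over $V(\Gamma'_Q)$: the interior slope contributions telescope, leaving only the boundary slope across the single edge from $\Gamma'_Q$ to $x'$, contributing $-(p-1) j_Q$, while the right-hand side assembles a combination of $\chi(\Gamma'_Q)$ and $\chi(\Gamma_Q)$ dictated by the tropical pullback convention for $\pi^* K_\Gamma$. Combined with an explicit computation of $\chi(\Gamma'_Q)$ --- exploiting that $\YY$ is smooth at $P$, so the exceptional configuration of $\CC'\to\YY$ above $P$ is a tree of smooth rational curves of multiplicity one whose tropical canonical divisor is determined by its topological shape --- this yields $\chi(\Gamma_Q) = 1 - (p-1) j_Q$. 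The main obstacle is precisely this final combinatorial step: tracking the pullback normalization of $\pi^*K_\Gamma$ and evaluating $\chi(\Gamma'_Q)$ explicitly from the resolution data, both of which must be done compatibly with the conventions of \cite{W}. The slope computation from property~(iii) is the key ingredient responsible for the $(p-1) j_Q$ term.
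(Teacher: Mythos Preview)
Your overall strategy matches the paper's proof: build a simultaneous skeleton, compute the outgoing slope of $\delta$ at $x'$ along the branch $b'$ corresponding to $P$ via Proposition~\ref{comparison RH}, sum the Riemann--Hurwitz identity $\Delta(\delta)=K_{\Gamma'}-\phi^*K_\Gamma$ over $\Gamma'_Q$, and combine with an explicit value of $\chi(\Gamma'_Q)$.

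There are, however, two genuine problems. First, your claim $m(x')=1$ is wrong in the paper's conventions. Multiplicities on $\Gamma'$ are computed over $k$, not over $k'$ (see Definition~\ref{metric graphs} and the caveat in \S\ref{galois deg p case}); since $\YY$ is smooth over $(k')^\circ$ and $k'/k$ is totally ramified of degree $p$, one has $m(x')=e(\H(x')/k)=p$. Consequently the slope is $\partial_{b'}\delta=m(x')\,\delta^{\log}_{\O_P/\O_Q}=p(p-1)j_Q$, not $(p-1)j_Q$, and the exceptional components of $\CC'\to\YY$ above $P$ do \emph{not} have multiplicity one; the paper shows instead that $\chi(\Gamma'_Q)=m(x')=p$ by invariance of $\chi(\cdot)$ under the blowup moves of \S\ref{blowup sequence}. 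With these corrected values the summed Riemann--Hurwitz identity reads $\partial_{b'}\delta=\chi(\Gamma'_Q)-p\,\chi(\Gamma_Q)=p-p\,\chi(\Gamma_Q)$, yielding the theorem. Your normalization would not close up: you would get $(p-1)j_Q$ on one side and $1-p\,\chi(\Gamma_Q)$ on the other, which is inconsistent for $p>1$.

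Second, your appeal to Theorem~\ref{thm: enlarging skeleta} to \emph{produce} the simultaneous skeleton is misplaced: that theorem is a recognition criterion ($\Gamma'$ is a skeleton iff $\chi(\Gamma')=\chi(C')$), not a construction, and verifying its hypothesis already requires control of $\Gamma'$. The paper instead invokes Abhyankar's theorem (see the reference to \cite[\S12]{K94}) to guarantee that after finitely many blowups of $\CC$ the normalised base change becomes toroidal; this is exactly the missing input. Once that is in place, the step you flag as the ``main obstacle'' is short: the pullback sums to $p\,\chi(\Gamma_Q)$ because $\phi^{-1}(\Gamma_Q)=\Gamma'_Q$ and $\sum_{v/w}\deg_v\phi=p$ for every $w\in\Gamma_Q$, while $\chi(\Gamma'_Q)=p$ follows from the blowup invariance just mentioned.
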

\begin{remark} Theorem \ref{thm: combinatorial slope} shows that the ramification theory of $\O_{P}/\O_{Q}$ adequately measures the ``combinatorial'' complexity of the quotient singularity at $Q$, and answers
	\label{rmk: comb slope} the main question which was left open in \cite{L14}: A translation of notation shows that the invariants $v(Q)$ and $\gamma g$ of Section 6.2 in op. cit. coincide with $\delta_{\O_{P}/\O_{Q}}$ and $p-\chi(\Gamma_{Q})$ respectively.
\end{remark}
\begin{block}[Weakly wild case] More information on the singularities can be obtained by imposing stronger conditions. In the setting of \ref{thm: combinatorial slope} we call the singularity of $\ZZ$ at $Q$ \emph{weakly wild} if $j_Q=1$; such singularities were studied in detail in \cite{L14,OW}, in particular Lorenzini \cite{L14} proves that $\Gamma_Q$ is given by Figure \ref{graph} below. Lorenzini \cite[p.333, guess of $\alpha$]{L14} asked if the number of multiplicity $p$ components on the horizontal segment of $\Gamma_Q$ equals $jp$, where $j$ is the unique lower ramification jump of $k'/k$. This was recently affirmed by Obus-Wewers \cite[Corollary 7.13]{OW} using methods from deformation theory. In Section \ref{sec:quotient} we give another proof in the ordinary case via a slope computation of the different function for the base change $\Gamma'_Q\to \Gamma_Q$. 
\end{block}
\refstepcounter{equation}
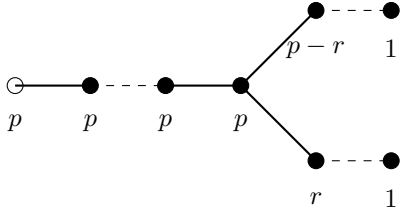
\begin{figure}[ht!]
\label{graph}
\centering
	\begin{tikzpicture}
		\draw (0,0) circle (3pt);
\draw[fill=black] (1,0) circle (3pt);
\draw[fill=black] (2,0) circle (3pt);
\draw[fill=black] (3,0) circle (3pt);
\draw[fill=black] (4,1) circle (3pt);
\draw[fill=black] (4,-1) circle (3pt);
\draw[fill=black] (5,1) circle (3pt);
\draw[fill=black] (5,-1) circle (3pt);

\node at (0,-0.5) {$p$};
\node at (1,-0.5) {$p$};
\node at (2,-0.5) {$p$};
\node at (3,-0.5) {$p$};
\node at (4,-1.5) {$r$};
\node at (5,-1.5) {$1$};
\node at (4,0.5) {$p-r$};
\node at (5,0.5) {$1$};
\draw[thick] (0,0) -- (1,0);
\draw[thick] (2,0) -- (3,0)--(4,1);
\draw[thick] (3,0) -- (4,-1);
\draw[dashed] (1,0) -- (2,0);
\draw[dashed] (4,-1) -- (5,-1);
\draw[dashed] (4,1) -- (5,1);
	\end{tikzpicture}
	\caption{Resolution graph $\Gamma_Q$ of a weakly wild $p$-cyclic arithmetic surface quotient singularities; we have $j_Q=1$ and $\chi(\Gamma_Q)=2-p$. The markings indicate multiplicities and $r$ is an integer such that $0<r<p$.}
\end{figure}
\begin{block}[Related work] Temkin and collaborators \cite{T16,CTT,T17,BT} made a complete study of the different function over algebraically closed fields. 
The different function goes back to work of Gabber and L\"utkebohmert, see the references cited in \cite{CTT}. 

Some relevant works on $2$-dimensional quotient singularities include \cite{brieskorn,artin,L13,IS,OW}, and regarding wild covers and wild base change of arithmetic curves we mention \cite{Saito,LL,L10,N12,L14,HN,OT,FT,DM23,ole}. For work related to skeleta, harmonic covers and the Riemann-Hurwitz formula see the references cited in \cite{W}.
\end{block}

\begin{remark}[Outlook] Some natural questions are not pursued in this paper and will be investigated in future work. First, it is natural to ask how to interprete Theorem \ref{thm: combinatorial slope} as a McKay correspondence in the sense of \cite{Y}. 
Second, we expect that most properties of the different function shown in \cite{CTT,T17} extend to the discretely valued setting, except for radiality of multiplicity loci. Nevertheless one can wonder if multiplicity loci are definable in a suitable first-order language like in the algebraically closed setting \cite{CKP}.
\end{remark}

\begin{block}[Overview]
In Section \ref{sec: diff} we give background on the different of finite extensions of discretely valued fields. 
In Section \ref{sec: models} we discuss models of curves and skeleta, and prove Theorem \ref{thm: enlarging skeleta}. In section \ref{sec: bc} we discuss properties of the different function associated to base change of skeleta. In Section \ref{sec:ell curve} give examples and a new proof of Theorem \ref{thm: ell curve}. In Section \ref{sec:quotient} we prove Theorem \ref{thm: combinatorial slope}. 
\end{block}

\section{Preliminaries on differents of discretely valued fields}
 \label{sec: diff}
 \begin{block} In this section we give some background on the different ideal for extensions of DVRs, see Proposition \ref{propn: (log) differents} for a summary.
\end{block}
\begin{block}[notation for this section]
 Throughout this section $E/F$ denotes a finite seperable extension of discretely valued fields and $E^{\circ}/F^{\circ}$ denotes the associated extension of rings of integers. The seperability assumption implies that the trace morphism $E\mapsto \mathrm{Hom}_{F}(E,F):1\mapsto \mathrm{Tr}_{E/F}(\cdot)$ is an isomorphism, and it is a well-known corollary that $E^{\circ}$ is a finite free $F^{\circ}$-module, see \cite[032L]{stacks}. In particular it follows that $\Omega_{E^{\circ}/F^{\circ}}$ is a finite sum of cyclic modules.
 \end{block}
\begin{definition}  \label{definition different} 
The \emph{(additive) different} of $E/F$ is defined as the integer $$\delta_{E^{\circ}/F^{\circ}} \coloneqq \mathrm{length}_{E^{\circ}}\Omega_{E^{\circ}/F^{\circ}}.$$ The \emph{(additive) log-different} is defined as $$\delta^{\log}_{E/F}=\mathrm{length}_{E^{\circ}}\Omega^{\log}_{E^\circ/F^\circ},$$ here $E^{\circ}$ and $F^{\circ}$ are equipped with the \emph{standard} log-structures defined by $E^{\circ}\setminus \{0\}\to E^{\circ}$ and $F^{\circ}\setminus \{0\}\to F^\circ$ and $\Omega^{\log}_{E^{\circ}/F^{\circ}}$ is finitely generated by \cite[IV.1.2]{ogus}. We will omit the adjective ``additive'' in the sequel. We will sometimes write $\delta_{E^{\circ}/F^{\circ}}$ instead to $\delta_{E/F}$ and similarly for the logarithmic variant.
\end{definition}

\begin{block}[reminder on determinants and fitting ideals] 
	\label{fitting ideals} If $M$ is a finitely generated $E^\circ$-module then the determinant $\det M$ as in \cite{KM} is a natural $E^{\circ}$-submodule of $\det M_{E}=\wedge^{\mathrm{rank}_EM_{E}}M_E$; up to natural isomorphism $\det M$ is defined as the alternating product $\bigotimes \left(\wedge^{\mathrm{rank} P^i}P^i\right)^{(-1)^i}$ of top wedge products of the terms in a finite length projective resolution $P^\bullet \to M$. For each short exact sequence $0\to M'\to M\to M''\to 0$ of finitely generated $E^{\circ}$-modules there is a natural adjunction $\det M\cong \det M'\otimes \det M''$, 
	Now suppose $M$ is torsion, then $\det M$ can be viewed as a fractional ideal and in fact $$\det M\cong \left(\mathrm{Fitt}^0M\right)^{-1}$$ where $\mathrm{Fitt}^0M$ denotes the 0'th Fitting ideal, by unwinding the definition \cite[07Z9]{stacks}. Explicitly, by Smith normal form theory $M$ is a sum of cyclic modules, say $M\cong \bigoplus_{i=1}^nE^{\circ}/(a_i)$ for some $a_1,\dots,a_n\in E^{\circ}$ and we have $(\det M)^{-1}=\mathrm{Fitt}^0(M)=(a)$ where $a$ is the product of the $a_i$. Note that $v_E(a)=\mathrm{length}\,M$. 
\end{block} 
\begin{block}[relation with different ideals and canonical modules] By \ref{fitting ideals} we have the following two alternative descriptions of the (log-)different. \label{2 descriptions diff}
	\begin{enumerate}[label=(\roman*)]
	\item The different is the normalised valuation of a generator of the \emph{different ideal\footnote{\label{rmk: canonical is dualising} In the literature, for instance in \cite{local fields}, the different ideal $\mathscr{D}_{E/F}$ is usually defined as the inverse of the trace dual -- the definitions agree by Proposition \ref{propn: (log) differents} (d) below }} $$\mathscr{D}_{E/F}\coloneqq \mathrm{Fitt}^0\Omega_{E^{\circ}/F^{\circ}},$$ and similarly the log-different $\delta_{E/F}^\log$ is the normalised valuation of a generator of the \emph{log-different ideal} $$\mathscr{D}_{E/F}^{\log}\coloneqq \mathrm{Fitt}^0\Omega_{E^{\circ}/F^{\circ}}^{\log}.$$

\item 	\label{relation with canonical modules} The canonical and log-canonical module of $E^{\circ}/F^{\circ}$ are defined as $$\omega_{E^{\circ}/F^{\circ}}\coloneqq \det \Omega_{E^{\circ}/F^{\circ}}\cong\mathscr{D}_{E/F}^{-1}$$ and $$\omega_{E^{\circ}/F^{\circ}}^{\log}\coloneqq \det \Omega_{E^{\circ}/F^{\circ}}^{\log}\cong\left(\mathscr{D}_{E/F}^{\log}\right)^{-1}.$$ The image of $1\in E$  in the (log-)canonical module of $E^{\circ}/F^{\circ}$ is called the \emph{trace element}. Then the (log-)different equals the order of vanishing of $\tau_{E^{\circ}/F^{\circ}}$ as a section of the invertible module $\omega_{E^{\circ}/F^{\circ}}^{(\log)}$. 
	\end{enumerate}
	As a consequence, when given a complete intersection presentation $E^{\circ}\cong F^{\circ}[x_1,\dots,x_k]/(f_1,\dots,f_k)$, the different can be computed as the valuation of the Jacobian determinant: $$\delta_{E/F}=v_E\left(\det\left(\left(\partial f_i/\partial x_j\right)_{ij}\right)\right).$$  
\end{block}

\begin{block}[Reminder on the Riemann-Hurwitz formula] \label{classical RH}The different is of geometric significance because it plays the role of the local term in the classical Riemann-Hurwitz formula: if $f:C\to D$ is a finite separable morphism of proper smooth curves over a field $k$ then we have a short exact sequence $0\to f^*\Omega_{D/k}\to \Omega_{C/k}\to \Omega_{C/D}\to 0$, applying determinants we obtain an adjunction $g^*\omega_{D/k}\otimes \omega_{C/D}\cong \omega_{C/k}$ and so $\omega_{C/D}\cong \underline{\mathrm{Hom}}(f^*\omega_{D/k},\omega_{C/k})$ and the trace element $\tau_{C/D}\in \Gamma(C,\omega_{C/D})$ is defined as the global section of $\omega_{C/D}$ corresponding to the pullback $\omega\mapsto f^*\omega$ of canonical forms. For each $c\in C$ the trace element $\tau_{C/D}$ localises to the trace element $\tau_{\O_{C,c}/\O_{D,f(c)}}\in \omega_{\O_{C,c}/\O_{D,d}}$. Hence $$\sum_{c\in C}\delta_{\O_{C,c}/\O_{D,f(c)}}=\deg \div\tau_{C/D}=\deg \omega_{C/D}=-\chi(C)+(\deg f)\chi(D).$$ 
\end{block}

\begin{proposition}[Properties of the different] \label{propn: (log) differents}
	Let $E/F$ be a finite seperable extension of discretely valued fields.
	\begin{enumerate}[label=(\alph*)]
	\item We have $$\delta^{\log}_{E/F}=\delta_{E/F}+1-e(E/F)\ge0,$$ where $e(E/F)$ denotes the ramification index of $E/F$.
	\item If $E/F/G$ is a tower of finite seperable extensions of discretely valued fields, then $$\delta_{E/G}=\delta_{E/F}+e(E/F)\delta_{F/G}\quad\text{and}\quad\delta^{\log}_{E/G}=\delta^{\log}_{E/F}+e(E/F)\delta^{\log}_{F/G}.$$
	\item Let $\hat{E}/\hat{F}$ denote the completion of $E/F$. Then $\delta_{\hat{E}/\hat{F}}=\delta_{E/F}$ and $\delta_{\hat{E}/\hat{F}}^{\log}=\delta_{E/F}^{\log}$.

	\item The \emph{trace morphism} $E\to \Hom_F(E,F):e\mapsto \mathrm{Trace}_{E/F}(e\cdot)$ 
restricts to an isomorphism \begin{equation}\omega_{E^{\circ}/F^{\circ}}\cong \Hom_{F^{\circ}}(E^{\circ},F^{\circ})\label{eqn: kcan} 
\end{equation} 
identifying the canonical module with the dualising module
, such that the trace element $\tau_{E^{\circ}/F^{\circ}}$ corresponds to the usual trace map.
	\item The following are equivalent:
	\begin{enumerate}
		\item[(i)] The extension $E^{\circ}/F^{\circ}$ is unramified  (resp. tamely ramified) 
		\item[(ii)] $\delta_{E/F}=0$ (resp. $\delta^{\log}_{E/F}=0$)
		\item[(iii)] $\tau_{E^{\circ}/F^{\circ}}$ generates $\omega_{E^{\circ}/F^{\circ}}$ (resp. $\tau_{E^{\circ}/F^{\circ}}$ generates $\omega^{\log}_{E^{\circ}/F^{\circ}}$).
	\end{enumerate}
	
	\item If $\mathrm{char}\,E=0$ we have $\delta^{\log}_{E/F}\le v_E(e(E/F)f^i(E/F)),$ where $f^i(E/F)$ denotes the inseperability degree of $\widetilde{E}/\widetilde{F}$.
\end{enumerate}
\end{proposition}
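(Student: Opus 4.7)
The plan is to prove the six parts using the Fitting-ideal/determinant framework from \ref{2 descriptions diff}, reducing via part (c) to the complete case and then exploiting explicit primitive-element presentations.

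Parts (c) and (d) are foundational. For (c), both $\Omega_{E^\circ/F^\circ}$ and $\Omega^{\log}_{E^\circ/F^\circ}$ are finitely generated torsion $E^\circ$-modules and hence already complete for the $\pi_E$-adic topology, so their lengths are unaffected by completion. For (d), after completion I choose a primitive element presentation $E^\circ = F^\circ[\alpha]/(f(\alpha))$ and obtain $\omega_{E^\circ/F^\circ} = f'(\alpha)^{-1} E^\circ$ inside $E$. The classical Euler trace identities $\Tr_{E/F}(\alpha^i/f'(\alpha)) = \delta_{i,n-1}$ for $0\le i\le n-1$ identify this fractional ideal with the trace dual $\{x \in E : \Tr(xE^\circ) \subset F^\circ\}$ and send $1 \in E$ to the trace map $\Tr_{E/F}$.

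For (a), the claim $\delta^{\log}_{E/F} = \delta_{E/F} + 1 - e(E/F)$ is equivalent, via the identifications in \ref{2 descriptions diff}, to the identity $\omega^{\log}_{E^\circ/F^\circ} = \pi_E^{e-1}\omega_{E^\circ/F^\circ}$ of fractional ideals inside the generic-fibre line. I verify this first in the totally ramified case with separable residue extension via an Eisenstein presentation for $\alpha=\pi_E$: using $d\alpha = \alpha\,d\log\alpha$ together with the pulled-back relation $d\log\pi_F = 0$, the log module $\Omega^{\log}_{E^\circ/F^\circ}$ is seen to be cyclic with generator $d\log\pi_E$ and annihilator of valuation $\delta - (e-1)$. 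The general case then follows by factoring off the maximal unramified subextension (which contributes nothing to either $\delta$ or $\delta^{\log}$) and invoking (b). Nonnegativity reduces to the classical inequality $\delta \ge e-1$, proved by the same Eisenstein computation. For (b), the transitivity $\Tr_{E/G} = \Tr_{F/G}\circ \Tr_{E/F}$ combined with the trace-dual description from (d) yields $\mathscr{D}_{E/G} = \mathscr{D}_{E/F}\cdot (\mathscr{D}_{F/G} E^\circ)$; translating via $v_E(aF^\circ) = e(E/F) v_F(a)$ gives the additive tower formula, and the log-version follows by combining with (a). For (e), the equivalence of (i) and (ii) is the classical characterisation ``$\Omega^{(\log)}_{E^\circ/F^\circ} = 0$ iff (tamely) unramified'', and the equivalence with (iii) comes from the order-of-vanishing description of $\delta^{(\log)}$ as the order of $\tau^{(\log)}$ as a section of the invertible module $\omega^{(\log)}$.

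For (f), I reduce via (b) to the totally ramified case and pick an Eisenstein polynomial $f \in F^\circ[x]$ with $f(\pi_E)=0$; a term-by-term inspection of $f'(\pi_E)$ yields $\delta_{E/F} = v_E(f'(\pi_E)) \le e-1 + v_E(ef^i)$, and (a) upgrades this to the stated log-bound $\delta^{\log} \le v_E(ef^i)$. The main obstacle I anticipate is part (a) when the residue extension is inseparable: a uniformiser is then no longer a primitive element for $E^\circ/F^\circ$, so the clean Eisenstein presentation above is unavailable, and one must instead construct a generator of $\omega^{\log}$ inside $\omega$ by hand using a more general primitive element, carefully tracking how the log and classical differentials interact under the comparison. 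Part (f) will need a parallel adjustment to its term-by-term computation in the mixed inseparable/wild case.
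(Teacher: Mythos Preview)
The inseparable-residue obstacle you flag for (a) in fact pervades (d), (e), and (f), and the missing ingredient is the paper's \emph{splitting method}. For (d), your assumption of a monogenic presentation $E^\circ = F^\circ[\alpha]/(f)$ after completion is not valid in general: by \cite[III.6, Prop.~12]{local fields}, $E^\circ/F^\circ$ is monogenic only when $\widetilde E/\widetilde F$ is separable or when $[E:F]$ is a prime power with purely inseparable residue extension. The paper's remedy is to pass to a Galois closure, take the fixed field of a $p$-Sylow subgroup, and thereby factor the extension into a residually-separable piece and a prime-power residually-purely-inseparable piece, each monogenic; one then checks that both sides of \eqref{eqn: kcan} are compatible in towers (via the adjunction formulae from (b)). The same device handles (e) and (f): your assertion in (e) that ``$\Omega^{\log}=0$ iff tamely ramified'' is classical glosses over exactly the hard direction, namely that an inseparable $\widetilde E/\widetilde F$ forces $\delta^{\log}>0$. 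The paper reduces via the splitting method to the purely inseparable prime-power case and then computes directly with an orthogonal basis $1,x,\dots,x^{q-1}$ to show $\delta^{\log}=v_E(f'(x))>0$. Your ``parallel adjustment'' for (f) is again this same reduction; the Eisenstein argument you sketch only sees the totally ramified part and cannot by itself bound the contribution of residual inseparability.

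There is also an organisational difference worth noting. The paper proves (b) not via trace-dual transitivity (which would make (b) depend on (d)) but directly from the exact sequences $0\to \Omega^{(\log)}_{F^\circ/G^\circ}\otimes E^\circ \to \Omega^{(\log)}_{E^\circ/G^\circ}\to\Omega^{(\log)}_{E^\circ/F^\circ}\to 0$ cited from \cite{W}; this treats the log case uniformly and makes (b) available as an input to (d) rather than an output of it. Likewise the paper does not redo your Eisenstein computation for (a) but simply cites the identity $\omega^{\log}=\pi_E^{e-1}\omega$ from \cite[\S7.28]{W}, proved there in full generality, so the inseparable-residue difficulty you anticipate in (a) is delegated rather than confronted head-on.
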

\begin{proof} Part (a): by \cite[\S7.28]{W} we have $\omega_{E^{\circ}/F^{\circ}}^{\log}=\pi_E^{e(E/F)-1}\omega_{E^{\circ}/F^{\circ}},$ as fractional ideals (for some choice of uniformiser $\pi_E$ of $E$). By \ref{2 descriptions diff}\ref{relation with canonical modules} this implies $\mathscr{D}^{\log}_{E/F}=\pi_E^{1-e(E/F)}\mathscr{D}_{E/F}$ and the desired equality follows by \ref{2 descriptions diff}(i). See \cite[\S5.4.9]{T16} for another proof (which generalises substantially).

Part (b): \label{adjunction} by \cite[\S7.10]{W} we have exact sequences $0\to \Omega_{F^{\circ}/G^{\circ}}\otimes_{F^{\circ}}\to \Omega_{E^{\circ}/G^{\circ}}\to  \Omega_{E^{\circ}/F^{\circ}}\to 0$ and $0\to \Omega_{F^{\circ}/G^{\circ}}^{\log}\otimes_{F^{\circ}}\to \Omega^{\log}_{E^{\circ}/G^{\circ}}\to  \Omega^{\log}_{E^{\circ}/F^{\circ}}\to 0.$
	Applying determinants, we obtain the \emph{adjunction formulae} $\omega_{F^{\circ}/G^{\circ}}\otimes_{F^{\circ}} \omega_{E^{\circ}/G^{\circ}}\cong \omega_{E^{\circ}/F^{\circ}}$ and $\omega_{F^{\circ}/G^{\circ}}^{\log}\otimes_{F^{\circ}} \omega_{E^{\circ}/G^{\circ}}^{\log}\cong \omega_{E^{\circ}/F^{\circ}}^{\log}$, so the formation of (log-)canonical modules is compatible in towers, and the conclusino follows from \ref{2 descriptions diff}. 

For part (c) note that $\hat{F}^{\circ}\otimes_{F^{\circ}} E^{\circ}$ is a finite $\hat{F}^{\circ}$-module, which is therefore complete and so $$\hat{F}^{\circ}\otimes_{F^{\circ}} E^{\circ}=\hat{F}^{\circ}\widehat{\otimes}_{F^{\circ}} E^{\circ}\cong \hat{E}^{\circ}.$$ This implies that $\Omega_{\hat{E}^{\circ}/\hat{F}^\circ}\cong \Omega_{E^{\circ}/F^\circ} \otimes_{F^{\circ}} \hat{E}^\circ.$ Equality of differents then follows by applying $\mathrm{length}()$ to both sides. The case of log-differents is similar, or alternatively follows from part (a) and the non-logarithmic case.

Part (d): the case where $E^{\circ}/F^{\circ}$ is monogenic follows from \cite[III. 6. Cor. 1]{local fields}. By \cite[III. 6. Proposition 12]{local fields}, the extension $E^{\circ}/F^{\circ}$ is monogenic if either $\widetilde{E}/\widetilde{F}$ is seperable or $E/F$ is of prime power degree and residually inseperable. If $E/F$ is Galois, the fixed field of a $p$-sylow subgroup of $\mathrm{Gal}(E/F)$ splits $E/F$ into a tower of a residually seperable and a prime power degree residually inseperable extension. After passing to a Galois closure, it will therefore suffice to show that both sides of \eqref{eqn: kcan} are compatible in finite seperable towers $E/F/G$ of extensions of discretely valued fields. This follows from the adjunction formulae of (b) and the canonical isomorphisms $\Hom_{F^{\circ}}(E^{\circ},F^{\circ})\otimes_{F^{\circ}}\Hom_{G^{\circ}}(F^{\circ},G^{\circ})\cong \Hom_{G^{\circ}}(E^{\circ},G^{\circ}),$ which are both compatible with trace elements.

Part (e): the equivalence of (ii) and (iii) is discussed in \ref{relation with canonical modules}. 
For the equivalence of (i) and (ii), it is classical that $\Omega_{E^{\circ}/F^{\circ}}=0$ if and only if $E^{\circ}/F^{\circ}$ is unramified. If $\widetilde{E}/\widetilde{F}$ is seperable, it follows from part (a) and \cite[III. Proposition 13]{local fields} that $E^{\circ}/F^{\circ}$ is tamely ramified if and only if $\delta^{\log}_{E/F}=0$. The non-trivial fact to prove is that if $\widetilde{E}/\widetilde{F}$ is inseperable, then $\delta^{\log}_{E/F}>0$. By (c) we can assume $E,F$ are complete, which means $E/F$ is defectless. By part (b) we can use the splitting method as in \ref{rmk: canonical is dualising}, and reduce to the case where $E/F$ is purely residually inseparable. Write $q=[E:F]$, which is a prime power. Choose an element $x\in E^{\circ}$ so that $\widetilde{x}$ generates $\widetilde{E}/\widetilde{F}$. Then $\{1,\widetilde{x},\dots,\widetilde{x}^{q-1}\}$ is a $\widetilde{F}$-basis of $\widetilde{E}$ and it follows that $\{1,x,\dots,x^{q-1}\}$ is an orthogonal $F$-basis of $E$. Let $f(x)$ denote the minimal polynomial of $x$, then $\Omega_{E^{\circ}/F^{\circ}}=E^{\circ}\mathrm{d}x/(f'(x)\mathrm{d}x),$ and so using (a) we have $\delta_{E/F}^{\log}=\delta_{E/F}=v_E(f'(x)).$ We can suppose that $f(x)=x^{q}+a_{q-1}x^{q-1}+\dots+a_1x+a_0$ for some $a_1,\dots,a_{q-1}\in F^{\circ\circ}$ and $a_0\in F^{\circ}$. Since the powers of $x$ form an orthogonal basis, it follows that $v_{E}(f'(x))=\min_{i=1,\dots,q} v_E(ia_i)>0,$ where we let $a_{q}=1$. This completes the proof of (e). See \cite[\S A.3]{AS} for another proof.

Part (f) is shown similarly as part (e): the residually seperable case follows from \cite[III, proposition 13, remark 1.]{local fields}, and for the general case we can use the splitting method, as both sides are insensitive to tame extensions and the right hand side obeys the same addition law as $\delta^{\log}_{E/F}$ with respect to towers of field extensions. So we are reduced to the case where $E/F$ is purely residually inseperable; and in that case the computation in (e) shows $\delta^{\log}_{E/F}=v_E(f'(x))\le v_E([E:F]).$
\end{proof}
\begin{remark} The proof of \ref{propn: (log) differents} (a) can be refined as follows.
	Denote by $\lambda_{E^{\circ}/F^{\circ}}:\Omega_{E^{\circ}/F^{\circ}}\to \Omega^{\log}_{E^{\circ}/F^{\circ}}$ the natural map. Then precisely one of the following two statements hold
	\begin{itemize}
		\item $p$ does not divide $e(E/F)$ and $\lambda_{E^{\circ}/F^{\circ}}$ is surjective with kernel isomorphic to $E^{\circ\circ}/F^{\circ\circ}$,   
		\item $p$ divides $e(E/F)$ and $\mathrm{ker}\lambda_{E^{\circ}/F^{\circ}}\cong E^{\circ}/F^{\circ\circ}$ and $\mathrm{coker}\lambda_{E^{\circ}/F^{\circ}}\cong \widetilde{E}$.
	\end{itemize}
	In particular, the kernel and cokernel of $\lambda_{E^{\circ}/F^{\circ}}$ are always cyclic.
	\begin{proof}
		Equip $\Z$ with the trivial log-structure. The Poincar\'e residue maps for $E^{\circ}$ and $F^\circ$ \cite[IV.1.2.14]{ogus}; 
 fit in the commuting square

$$\begin{tikzcd}
\Omega_{F^{\circ}/\Z}^{\log}\otimes E^{\circ} \arrow[r, ""] \arrow[d, "\mathrm{res}\otimes 1"] & \Omega^{\log}_{E^{\circ}/\Z} \arrow[d, "\mathrm{res}"] \\
E^{\circ}/F^{\circ\circ} \arrow[r, "{\cdot\,{e(E/F)}}"]    & \widetilde{E},                                                  
\end{tikzcd} $$

where the top map is the natural one. By the proof of \cite[\S7.28]{W} we obtain a short exact sequence $$0\lra \ker\lambda_{E^{\circ}/F^{\circ}}\lra E^{\circ}/F^{\circ\circ} \overset{\cdot\,{e(E/F)}}{\lra}    \widetilde{E} \lra \coker \lambda_{E^{\circ}/F^{\circ}} \lra 0,$$ from which the result follows.
	\end{proof}
\end{remark}

\begin{remark} To put \eqref{eqn: kcan} into context, let us explain a generalisation via Grothendieck-Verdier duality theory \label{rmk: duality}  \cite{conrad}. For any syntomic (i.e. flat and l.c.i.) and proper morphism of schemes $f:X\to Y$, there exists an adjunction $f_*:\mathbf{QCoh}(Y)\rightleftarrows \mathbf{QCoh}(X):f^!$ such that $f_*f^!\mathscr{G}=\underline{\Hom}_{\O_Y}(f_*\O_X,\mathscr{G})$, and this determines $f^!\mathscr{G}$ if $f$ is affine. Moreover $f^!\mathscr{G}\cong f^*\mathscr{G}\otimes \omega_{X/Y}$ where $\omega_{X/Y}=\det \Omega_{X/Y}$ is the canonical sheaf (well-defined since $\Omega_{X/Y}$ is perfect by the assumption that $f$ is l.c.i.), in particular $f^!\O_{Y}\cong \omega_{X/Y}$. 
The counit map $f_*f^!\mathscr{G}\lra \mathscr{G}$ in the case where $\mathscr{G}=\O_Y$ yields a \emph{residue} map $$\mathrm{res}_f:f_*\omega_{X/Y}\to \O_Y.$$ 
Now suppose $f:X\to Y$ is finite. Then from linear algebra there is the ``usual'' $\O_Y$-linear trace map $\mathrm{tr}_f:f_*\O_X\to \O_Y$, and thus we get an adjoint map $\mathscr{O}_X\to f^!\O_Y\cong\omega_{X/Y}$ which generalises the trace morphism. The image of $1$ is the \emph{trace element} $\tau_{X/Y}$ of $\omega_{X/Y}$. The trace map $\mathrm{tr}_f$ equals the composition $$f_*\O_X\lra f_*\omega_{X/Y}\overset{\mathrm{res}_f}{\lra} \O_{Y},$$ see for instance \cite[Prop. 1]{Fargues}, and so we recover Proposition \ref{propn: (log) differents} (d). 
For a further generalisation (due to Tate) to any quasi-finite syntomic $f:X\to Y$, see \cite[0BWH]{stacks}. 
\end{remark}
\section{Models and skeleta of curves}
\label{sec: models}
\begin{block}[Models]\label{block models}
Recall our conventions of $\ref{notation}$ on $k^{\circ}$-models $\CC$ of $C/k$. In particular, models are assumed to be proper and normal, and by default we equip $\CC$ with the divisorial log-structure defined by the special fiber $\CC_s$. We call a $k^{\circ}$-model $\CC$ \emph{toroidal} if $\CC^{\dagger}$ is logarithmically regular for the Zariski topology \cite{K94}. We call a $k^{\circ}$-model $\CC$ \emph{snc} if $\CC$ is regular and $\CC_{s,\red}$ is a strict normal crossings divisor in $\CC$. In fact \emph{snc} $k^{\circ}$-models coincide with regular toroidal $k^{\circ}$-models by \cite[\S11.6]{K94}. 
\end{block}
\begin{block}[Contraction of components]
\label{block: contraction} 
By the theory of contraction of normal fibered surfaces, see \cite[\S6.7/4]{BLR}, we have the following. 
Let $\CC$ be a $k^{\circ}$-model and let $C_i$, $i\in I$ be the irreducible components of the special fiber $\CC_s$. Let $J$ be a \emph{strict} subset of $I$. Then there exists a unique proper \emph{contraction morphism} $f:\CC\to \CC_J$ to a $k^{\circ}$-model $\CC_J$ such that $C_j$, $j\in J$ are precisely the components of $\CC_s$ which map onto a closed point, and $f$ defines an isomorphism on $\CC\setminus \cup_{j\in J}C_j$. We will also call $\CC_J$ a \emph{contraction} of $\CC$.
\end{block}
\begin{block}[Relatively minimal models]\label{minimal models}
A $k^{\circ}$-model $\CC$ is called \emph{relatively minimal snc} if $\CC$ is \emph{snc} and all contractions in the sense of \ref{block: contraction} are no longer \emph{snc}. It is well-known that if $g(C)\ge 1$ then every relatively minimal \emph{snc} model is minimal \emph{snc}, i.e. final in the category of \emph{snc} $k^{\circ}$-models, see \cite[\S9.3.20]{Liu} and erratum \cite{liu erratum}. The proof of loc.cit also implies the following variant, not assuming $g(C)\ge 1$. Fix a given $k^{\circ}$-model $\CC$ and only consider \emph{snc} $k^{\circ}$-models which dominate $\CC$. Then $k^{\circ}$ models relatively minimal \emph{snc} over $\CC$ are minimal \emph{snc} over $\CC$, i.e. final among \emph{snc} $k^{\circ}$-models over $\CC$.
\end{block} 
\begin{remark}
The theory of minimal models of \ref{minimal models} does not extend verbatim by replacing '\emph{snc}' by 'toroidal' throughout: one must consider log-regularity in the \'etale topology to deal with self-intersections of components, and require $g(C)\ge 2$; see \cite{Nag}. \end{remark}
\begin{block}[Resolutions]
By Lipman's resolution of $2$-dimensional excellent schemes \cite{lipman} any $k^{\circ}$-model admits a \emph{resolution} , i.e. a morphism $h:\CC'\to\CC$ of $k^{\circ}$-models such that $\CC'$ is a \emph{snc} $k^{\circ}$-model and $h^{-1}\CC_{\mathrm{reg}}\overset{\cong}{\to}\CC_{\mathrm{reg}}$. Then by \ref{minimal models} there exist a unique minimal \emph{snc} $k^{\circ}$-model over $\CC$, also called \emph{the minimal snc resolution} of $\CC$. 
\end{block}
\begin{block}[Toroidal resolutions]
Now assume that $\CC$ is a toroidal $k^{\circ}$-model, then the fan of $\CC$ is $2$-dimensional and therefore admits a unique minimal regular subdivision, giving rise to the minimal \emph{snc} resolution $h:\CC'\to \CC$ \cite[\S10-11]{K94} (also see \cite[\S A.29]{W}), it is explicitly described as follows. Let $x\in \CC$ be a singular point, note that $x$ is a closed point since $\CC$ is normal. Then the exceptional divisor $E_x\coloneqq h^{-1}{x}$ above $x$ satisfies condition \emph{(chain)} below.
\begin{enumerate}
  \item[\emph{(chain)}] The divisor $E_x$ is a strict normal crossings divisor, and can be written as a sum $\sum_{i=1}^n E_i$ of prime divisors such that $E_i\cong \P^1_{k(x)}$ and $E_i^2\le -2$ for all $i$. 
   Also for all $1\le i\ne j\le n$ we either have $|i-j|>1$ and $E_i\cap E_j=\emptyset$ or $|i-j|=1$ and $E_i\cap E_j$ is a unique $k(x)$-rational point. The divisor $E$ intersects the strict transform of $\CC_s$ in exactly two $k(x)$-rational points, one on $E_1$ and one on $E_n$. 
\end{enumerate}
\end{block}
\begin{remark} The $\kappa(x)$-rationality conditions in (\emph{chain}) can be omitted, by the assumption that $\tilde{k}$ is algebraically closed. We formulated (\emph{chain}) such that Lemma \ref{lem: characterisation toroidal models of curves} below is also true without this assumption. This lemma is essentially due to Ito and Schro\"er.  
\end{remark}		
\begin{lemma}[Ito-Schro\"er]
  \label{lem: characterisation toroidal models of curves} A $k^{\circ}$-model $\mathscr{C}$ of $C$ is toroidal if and only if there exists a resolution $h:\CC'\to \CC$ such that the exceptional divisor $E_x$ above each point $x\in \CC$ satisfies condition (\emph{chain}).
\end{lemma}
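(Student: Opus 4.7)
The plan is to check the two implications locally at each singular point of $\CC$, treating the two directions separately.

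For the forward direction, I would invoke the structure theory of Kato \cite{K94}. If $\CC^\dagger$ is log-regular, its characteristic monoid fan is a $2$-dimensional rational polyhedral fan, and the minimal regular subdivision of this fan produces the minimal snc resolution $h:\CC'\to\CC$ (see \cite[\S10-11]{K94}). For each singular point $x$ corresponding to a non-regular $2$-dimensional cone $\sigma_x$, the minimal regular subdivision of $\sigma_x$ is the Hirzebruch--Jung chain of rays between the two extremal rays of $\sigma_x$. Dualising back to geometry, this chain of rays becomes a chain of $\P^1$s making up $E_x$: the self-intersections are all $\le -2$ by minimality, and the two extremal rays, which are preserved by the subdivision, correspond to the two components of the strict transform of $\CC_s$ through $x$, giving the endpoint clause in (\emph{chain}).

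For the reverse direction, fix a singular point $x\in\CC$ and a resolution $h:\CC'\to\CC$ with $E_x$ satisfying (\emph{chain}). First observe that the $E_i^2\le -2$ condition forces $h$ to be the minimal resolution above $x$, since there are no $(-1)$-curves to contract. By Hirzebruch--Jung theory for excellent two-dimensional normal local rings (see \cite{lipman}), a normal surface singularity whose minimal resolution is a chain of smooth rational curves with self-intersections $\le -2$ is a cyclic quotient singularity, hence toric. Passing to the strict henselisation of $\CC$ at $x$, one obtains an isomorphism with a cyclic quotient of a regular two-dimensional local ring over $k^\circ$, producing an \'etale-local toric chart. The endpoint clause of (\emph{chain}) -- that $E_x$ meets the strict transform of $\CC_s$ precisely in the two extremal points, one on $E_1$ and one on $E_n$ -- then translates into the statement that the two extremal rays of $\sigma_x$ correspond exactly to the two components of $\CC_s$ through $x$. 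This identifies $\M_\CC$ at $x$ with the standard toric log-structure and witnesses log-regularity.

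The main obstacle is in the reverse direction: ensuring that the toric structure reconstructed from Hirzebruch--Jung matches the divisorial log-structure $\M_\CC$, i.e.\ that the monoid of functions invertible on the generic fiber corresponds to the standard toric monoid on the cyclic quotient. This is precisely controlled by the endpoint part of (\emph{chain}); without it one could recover a toric structure whose boundary divisors do not agree with the components of $\CC_s$ through $x$. A secondary subtlety is that we work in mixed characteristic over $k^\circ$, so the toric model carries one ``arithmetic'' direction coming from a uniformiser of $k$; this is handled either by passing to $\widehat\O_{\CC,x}$ and invoking Cohen's structure theorem, or by working throughout in the relative toroidal framework of Kato \cite{K94}.
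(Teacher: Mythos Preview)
Your forward direction is essentially what the paper does: the preceding block on toroidal resolutions already explains that the minimal regular subdivision of the fan of a log-regular surface yields the minimal \emph{snc} resolution and that the exceptional locus over each singular point satisfies (\emph{chain}), and the paper's proof therefore only treats the reverse implication.

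For the reverse direction your route and the paper's genuinely diverge. You go via Hirzebruch--Jung: a chain of $(-n)$-curves with $n\ge 2$ should force a cyclic quotient singularity, hence an \'etale-local toric chart, and then you match the boundary with $\CC_s$ using the endpoint clause. The paper instead first uses Lipman's generalised Artin contractibility criterion \cite[27.1]{lipman rational} to deduce that $x$ is a \emph{rational} singularity (the fundamental cycle is $E_x$ itself, so $\chi(E_x)=1>0$), and then cites Ito--Schr\"oer \cite[\S3.4, \S3.6]{IS} to conclude that the divisorial log-structure is fs and log-regular at $x$. The paper's path is shorter because it outsources the delicate step entirely to \cite{IS}; your path is closer to the classical intuition but you would have to reprove part of what \cite{IS} does.

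There is a real gap in your sketch. The Hirzebruch--Jung identification of chain resolutions with cyclic quotients is classical over a field, but you are working over $k^{\circ}$ in mixed characteristic; the reference you give (\cite{lipman}) is Lipman's desingularisation paper, which does not contain that statement. More importantly, the paper's definition of ``toroidal'' is Zariski log-regularity, not \'etale, so producing an \'etale-local toric chart via strict henselisation is not quite enough: you still owe the argument that the divisorial log-structure $\M_{\CC}$ is Zariski-locally fs at $x$. This is exactly the content of \cite[\S3.4]{IS}, and it is why the paper routes through rationality first (rationality controls the monoid $\M_{\CC,x}/\O_{\CC,x}^{\times}$ well enough to see it is fs). Your plan can be completed, but the completion essentially amounts to redoing the Ito--Schr\"oer argument.
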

\begin{proof} Suppose $E_x$ satisfies \emph{(chain)}. An application of Lipman's generalisation of Artin's contractibility criterion \cite[27.1]{lipman rational} shows that $x$ is a rational singularity (the fundamental cycle equals $E_x$, so $\chi(E_x)=1>0$). Hence the argument of \cite[\S3.4]{IS} shows that the log-structure of $\CC^{\dagger}$ is \emph{fs} at $x$ and by the proof \cite[\S3.6]{IS} it follows that $\CC$ is toroidal at $x$. 
\end{proof}
\begin{block}[Analytifications]\label{comb study of skeleta assumptions}
  Write $C^{\an}$ for the Berkovich $k$-analytification of $C$ \cite{B90}. Its points $x=|\cdot(x)|\in X$ come in three types:
  \begin{enumerate}
   	\item[(type 1)] closed points of $C$, 
   	\item[(type 2)] discrete $k$-valuations on $k(C)$,
   	\item[(type 3)] non-discrete $k$-valuations on $k(C)$.
   \end{enumerate} 
   There are no type 4 points since $k$ is spherically complete.
   Recall that the completed residue field at a point $x\in X$ is denoted by $\H(x)$. For a type $2$ point $x\in X$ the residual curve $C_x$ is defined as the unique proper smooth $\tilde{k}$-curve with function field $\widetilde{\H(x)}$ and we denote $g(x)$ for the genus of $C_x$. If $x\in X$ is not of type $2$ we let $g(x)=0$. We write $\chi(x)=2-2g(x)$. A branch at a point $x$ is an equivalence class of intervals emanating from $x$, and the number of branches is 1,$\infty$,2 for type 1,2,3 points respectively, see \cite{ducros} for a thorough study of Berkovich curves.
\end{block}
\begin{block}[Dual graph] Let $\CC$ be a toroidal $k^{\circ}$-model. We write $V(\CC)$ for the \emph{vertex set} associated to $\CC$, defined as the set of generic points of $\CC_{s}$. Given $x\in V(\CC)$ we write $C_x\coloneqq \overline{\{x\}}$ for the corresponding component of $\CC_s$. The \emph{dual graph} $\Gamma(\CC)$ of $\CC$ is defined as the graph (undirected, loopless, possibly some multiple edges) with vertex set $V(\CC)$ and $x,x'\in V(\CC)$ are connected by an edge $[x,x']$ if and only if $C_x\cap C_{x'}\ne\emptyset$.  

We can also view a point $x\in V(\CC)$ as a discrete $k$-valuation on $k(C)$, namely the valuation defined by $f\mapsto \frac{1}{m(x)}\ord_{C_x}(f)$ where $m(x)\coloneqq \mathrm{mult}_{\CC_s}C_x$ denotes the \emph{multiplicity} of $x$. So we can view $V(\CC)$ as a finite set of type $2$ points of $C^{\an}$. 
\end{block}
\begin{remark}In fact, $\CC$ can be reconstructed from its vertex set $V(\CC)\subset C^{\an}$ (we omit some details since this remark will be of no further use): if $S\subset C^{\an}$ is a non-empty finite set of type $2$ points of $C^{\an}$, then via a blow-up procedure there exists a $k^{\circ}$-model $\CC$ such that $V(\CC)\supset S$, and by \ref{block: contraction} the unique contraction $\CC\to\CC_J$ where $J=S\setminus V(\CC)$ satisfies $V(\CC_J)=S$. This gives a one-to-one correspondence between $k^{\circ}$-models and non-empty finite sets of type $2$ points contained in $C^{\an}$ such that $\CC_1$ dominates $\CC_2$ if and only if $V(\CC_1)$ contains $V(\CC_2)$. A generalisation can be found in \cite[6.3.15]{ducros}. \label{correspondence vertex sets and proper models}\end{remark}
\begin{definition}[Skeleta]\label{definition skeleton} Let $\CC$ be a toroidal $k^{\circ}$-model. By \cite[\S3]{BM}, generalising earlier constructions of Berkovich and others, there exists a canonical subspace $\sk(\CC)\subset C^{\an}$ called the \emph{skeleton of $\CC$}, which is a geometric realisation of $\Gamma(\CC)$ identifying points of $V(\CC)$ with their corresponding type $2$ point. Moreover there exists a canonical continuous retraction $\rho_{\CC}:C^{\an}\to \sk(\CC)$ and the argument of \cite[\S5.2]{BPR} shows that the induced map $C^{\an}\to \lim \sk(\CC)$ is a homeomorphism.

  We call a subspace $\Gamma\subset X$ a \emph{skeleton of $X$} if $\Gamma=\sk(\CC)$ for some toroidal model $\CC/k^{\circ}$ of $C/k$. Note that any skeleton is also the skeleton of a strict normal crossings (\emph{snc}) model $\CC$ because a toroidal resolution does not change the skeleton. 

  We equip $C^{\an}$ with the potential metric $d(\cdot,\cdot)$ introduced in \cite{BN}. On a skeleton $\Gamma$ this metric is induced by the \emph{conformal} measure $\mu_{\Gamma}$ introduced in \cite[\S4]{W}. 
\end{definition}

\begin{definition}[subgraphs of $C^{\an}$]\label{metric graphs}\label{subgraph} In this paper we define a \emph{subgraph} $\Gamma$ of $X$ as a compact connected subspace which is the geometric realisation of a finite (loopless, undirected, possibly multiple edges) graph with type $2$ leaves. 
  Let $\Gamma$ be a subgraph of $X$. For any point $x\in \Gamma$ we denote by $\br_\Gamma(x)$ the set of \emph{branches at $x$ in $\Gamma$}, i.e. germs of intervals of $\Gamma$ emanating from $x$. 
  We write $$\val_\Gamma(x)\coloneqq \#\br_\Gamma(x)$$ for the \emph{valency} of $x$ with respect to $\Gamma$, note that $\val_\Gamma(x)$ is finite for all $x\in \Gamma$. We write \begin{equation}
\chi_\Gamma(x)=\chi(x)-\mathrm{val}_\Gamma(x).
\end{equation}

  We define the set of \emph{vertices} $V(\Gamma)$ of $\Gamma$ by $$V(\Gamma)\coloneqq\{x\in\Gamma:\chi_\Gamma(x)\ne0\}=\{x\in \Gamma:\ \mathrm{val}_\Gamma(x)\ne 2\text{ or }g(x)>0\}.$$ Note that $V(\Gamma)$ is a finite set of type $2$ points since $\Gamma$ is assumed compact. Beware that for a toroidal $k^{\circ}$-model the inclusion $V(\sk(\CC))\subset V(\CC)$ can be strict.
The set of \emph{leaves of $\Gamma$} is the set $\leaf(\Gamma)=\{x\in \Gamma:\val_\Gamma(x)=1\}$. We define the combinatorial divisor $K_{\Gamma}=\sum_{x\in\Gamma}m(x)\chi_\Gamma(x)[x]$, where $m(x)=e(\H(x)/k)$ is the \emph{multiplicity} of a type $2$ point, and the 
 \emph{Euler characteristic of $\Gamma$} is defined as \begin{equation}
  \label{eqn: chi of skeleton} \chi(\Gamma)\coloneqq\deg K_\Gamma.\end{equation} 
  Note that by our assumptions only finitely many summands are nonzero, and hence $\chi(\Gamma)$ is well-defined.
\end{definition} 
\begin{block}\label{blowup sequence}
	Suppose $\CC$ and $\CC_0$ are \emph{snc} $k^{\circ}$-models such that $\CC$ dominates $\CC_0$. Then by the factorisation theorem, see \cite[\S9.2.2]{Liu}, the morphism $\CC\to\CC_0$ factors as a composition of blowups $$\CC=\CC_n\to\CC_{n-1}\to\dots\to \CC_0$$ in closed points. At each step, there are two possibilities: \begin{enumerate}
  \item \emph{Blowup of a nodal point $z\in(\CC_{i})_s$}. In this case $\sk(\CC_{i+1})=\sk(\CC_i)$, and the exceptional component of $\CC_{i+1}\to\CC_i$ has multiplicity equal to $\mathrm{mult}(E_1)+\mathrm{mult}(E_2)$ where $E_1$ and $E_2$ are the components of $(\CC_i)_s$ containing $z$ (possibly $E_1=E_2$).
  \item \emph{Blowup of a smooth point $z\in (\CC_{i})_{s,\red}$}: in this case, $\sk(\CC_{i+1})$ is obtained by gluing an interval to $\sk(\CC_i)$ and the exceptional component of $\CC_{i+1}\to\CC_i$ has multiplicity $\mathrm{mult}(E)$ where $E$ is the unique component of $(\CC_i)_s$ containing $z$. 
\end{enumerate} 
\end{block}

\begin{definition}\label{defn neat}
  Let $I=(x,y)$ be a subinterval of $C^{\an}$. Then we call $I$ \emph{neat} if there exists a skeleton $\Gamma=\sk(\CC)$ such that $I\in \pi_0(\Gamma\setminus V(\CC))$ and $m(x)=m(y)$.
\end{definition}
\begin{example} For instance, suppose that $\Gamma$ is a skeleton and let $x\in C^{\an}\setminus \Gamma$ be a divisorial point. Suppose that $m(\rho_{\Gamma}(x))=m(x)$ and $m(x)^2d(x,y)\in\Z$. Then $I=[x,\rho_{\Gamma}(x))$ is neat: indeed, we can suppose $\Gamma=\sk(\CC)$ for some proper \emph{snc} model $\CC$ such that $\rho_{\Gamma}(x)\in V(\CC)$, and by repeatedly blowing up the specialisation of $x$ we obtain a proper \emph{snc} model $\CC'$ such $V(\CC')=V(\CC)\sqcup \{x\}$ and $I=\sk(\CC')\setminus (\{x\}\cup\sk(\CC))$.\label{specialisation neat}
\end{example}
\begin{lemma}[Combinatorial characterisation of multiplicities]
  \label{nc interval multiplicities}
  Let $I=(x,y)\subset C^{\an}$ be a neat interval such that $d(x,y)=m(x)^{-2}$. Let $z\in I$ be divisorial. Then $m(z)$ is equal to the smallest integer $M$ for which $M\cdot m(x)d(x,z)\in \Z$. 
\end{lemma}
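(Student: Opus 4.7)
The plan is to realize $z$ as a vertex introduced in some blow-up refinement of $I$, and then to control its multiplicity and position by induction using the classical Farey-mediant arithmetic.

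First, by the neatness hypothesis I would fix an \emph{snc} $k^{\circ}$-model $\CC$ realizing $I$ as a connected component of $\sk(\CC)\setminus V(\CC)$; setting $m:=m(x)=m(y)$, the interval $I$ corresponds to a single edge of the dual graph with $d(x,y)=1/m^{2}$. Since $z$ is divisorial and $\tilde{k}$ is algebraically closed, Remark~\ref{correspondence vertex sets and proper models} supplies an \emph{snc} model $\CC'\supseteq \CC$ with $z\in V(\CC')$. Next I would apply the factorization theorem \ref{blowup sequence} to decompose $\CC'\to \CC$ into successive blow-ups of smooth and nodal points, and discard the smooth-point blow-ups: by \ref{blowup sequence} these only add leaves of the same multiplicity and contribute no interior vertex to $I$. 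The remaining blow-ups are nodal and successively subdivide $I$, each inserting a new vertex whose multiplicity is the sum of the two neighbouring multiplicities.

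I would then induct on the number of these nodal blow-ups. Parametrising $\overline{I}$ by $s:=m^{2}d(x,\cdot)\in[0,1]$, the inductive claim is that at each stage every interior vertex $w$ of the refinement has $s(w)=a_{w}/n_{w}$ in lowest terms with multiplicity $m(w)=m\cdot n_{w}$, and that consecutive interior vertices (together with the endpoints extended as $0/1$ and $1/1$) form classical Farey neighbours, i.e.\ $a_{j+1}n_{j}-a_{j}n_{j+1}=1$. The base case is immediate. For the inductive step, blowing up the node between Farey neighbours $(a_{j},n_{j})$ and $(a_{j+1},n_{j+1})$ produces a new vertex of multiplicity $m(n_{j}+n_{j+1})$ at the mediant position $(a_{j}+a_{j+1})/(n_{j}+n_{j+1})$; this follows by combining the edge-length prescription $d(w,w')=1/(m(w)m(w'))$ read off from \ref{blowup sequence} with the mediant identity, and the Farey-neighbour condition is preserved by the classical determinant computation.

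Granting this inductive structure, for any interior divisorial $z\in I$ we obtain $m(z)=m\cdot n$ where $n$ is the denominator of $m^{2}d(x,z)$ in lowest terms, which translates to the statement that $m(z)$ is the smallest positive integer $M$ with $Mm(x)d(x,z)\in\Z$. The hardest part will be to carry the Farey-neighbour and lowest-terms identities cleanly through the mediant-insertion step, since the "smallest integer" characterization depends precisely on the coprimality furnished by those identities; the rest is bookkeeping around the blow-up sequence together with repeated use of the edge-length formula.
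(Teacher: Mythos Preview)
Your proposal is correct and follows essentially the same approach as the paper's proof. Both arguments parametrise $\overline{I}$ by $q=m(x)^{2}d(x,\cdot)\in[0,1]$, identify the multiplicities via the Farey/mediant structure, and conclude $m(\phi(q))=b(q)\,m(x)$ where $b(q)$ is the reduced denominator of $q$. The paper packages the induction slightly differently---it inducts on the Farey level $M$ and quotes the classical fact that consecutive elements of $S_{M}=\{q:b(q)\le M\}$ satisfy $q_{i+1}-q_{i}=1/(b(q_{i})b(q_{i+1}))$---whereas you induct along a blow-up sequence and redevelop the mediant identities by hand; the mathematical content is identical. Two small points: the edge-length formula $d(w,w')=1/(m(w)m(w'))$ is a property of the potential metric rather than something literally stated in \ref{blowup sequence}, so you should cite the metric definition instead; and your step of ``discarding'' smooth-point blow-ups is fine but could be replaced more cleanly by simply blowing up nodes on $I$ until $z$ appears (as the paper implicitly does), avoiding the need to reorder an arbitrary factorisation.
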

\begin{proof} For each $q\in [0,1]\cap \Q$ let $(a(b),b(q))$ be the unique pair of coprime integers such that $q=a(q)/b(q)$. Let $S_M=\{q\in [0,1]\cap\Q: b(q)\le M\}$. By the theory of Farey sequences it is well known that if $0=q_{0}<q_{1}<\dots <q_{r}=1$ are the elements of $S_M$ then $q_{i+1}-q_i=\frac{1}{b(q_{i})b(q_{i+1})}$.

By the assumptions we can choose an \emph{snc} model $\CC$ such that $x,y\in V(\CC)$ and $I\in\pi_0(\sk(\CC)\setminus V(\CC))$. Consider the unique isometry $\phi:[0,1]\to I$ such that $\phi(0)=x$ and $\phi(1)=y$. Then by induction on $M$, it follows with the above notation that $m(\phi(q_i))=b(q_i)m(x)$. Therefore $m(\phi(q))=b(q)m(x)$ for all $q\in [0,1]\cap\Q$ and the result follows because $d(x,\phi(q))=q/m(x)^2$.
\end{proof}
\begin{proof}[Proof of Theorem \ref{thm: enlarging skeleta}]
First, assume that $\Gamma$ is a skeleton. We can assume that $\Gamma=\sk(\CC)$ for some \emph{snc} model $\CC$ of $C$. By a intersection-theoretic computation, see for instance \cite[Lemma 3.1.2]{N12}, it follows that $\chi(\Gamma)=\chi(C)$. For the other direction, let us consider the unique  \emph{snc} model $\CC$ which is minimal with respect to the property that $$\Gamma\subset \sk(\CC).$$ 
Note that $\sk(\CC)$ is the smallest skeleton containing $\Gamma$. By \ref{blowup sequence} we can factor $\CC\to\CC_{\min}$ as a sequence of blowups in closed points. Observe that in the blowup sequence, new nodes of $\sk(\CC)$ can only be created by blowing up smooth points of the reduced special fiber of components that are \emph{not} leaves.  
It follows by minimality of $\CC$ that \emph{any node of $\sk(\CC)$ is contained in $\Gamma$ 
and no node of $\sk(\CC)$ is a leaf of $\Gamma$}. 
Hence $\sk(\CC)\setminus \Gamma$ is a disjoint union of intervals, that is 
 $$\sk(\CC)=\Gamma \cup \bigsqcup_{x\in \leaf(\sk(\CC))}[x,\rho_\Gamma(x)].$$ Note that for all $x\in \leaf(\sk(\CC))$ we have that $\rho_{\Gamma}(x)$ is of type $2$ and moreover $m(\rho_{\Gamma}(x))\ge m(x)$ since $\rho_{\Gamma}(x)$ can be obtained as a divisorial point after a sequence of nodal blowups of $\CC$. 
  Now observe that for all $x\in\leaf(\sk(\CC))$ we have that $$\chi_{\sk(\CC)}(x)=1\quad\text{and}\quad\chi_{\sk(\CC)}(\rho_{\Gamma}(x))=\chi_{\Gamma}(\rho_{\Gamma}(x))-1$$ and therefore $$0\le \sum_{x\in \leaf(\sk(\CC))}m(\rho_\Gamma(x))-m(x)=\chi(\Gamma)-\chi(\sk(\CC)).$$ By assumption $\chi(\Gamma)=\chi(\sk(\CC))$ It follows that $m(\rho_\Gamma(x))=m(x)$ is an equality for every $x\in \leaf(\sk(\CC))$, and as in \ref{specialisation neat} it follows by minimality of $\CC$ that $\Gamma=\sk(\CC)$, as desired.
\end{proof}

\section{Skeleta and base change}
\label{sec: bc}
\begin{block}\label{ass sec bc}
	In this section we specialise the results of \cite{W} to the following setting. Let $C/k$ as in \ref{notation}, let $\CC/k^{\circ}$ be a toroidal model and let $k'/k$ be a finite seperable extension. We make the hypothesis that the normalised base change $\CC'\coloneqq (\CC\otimes_{k^{\circ}}k'^{\circ})'$ is a toroidal $(k')^{\circ}$-model of $C'\coloneqq C\otimes_{k}k'$. Note that $\CC'$ is also a toroidal $k^{\circ}$-model of $C'$ viewed as a (non-geometrically connected) $k$-curve. We write $\Gamma'=\sk(\CC')$ and $\Gamma=\sk(\CC)$. Let $\pi:C'\to C$ be the projection, and let $\phi=\pi^{\an}\vert_{\Gamma'}:\Gamma'\to\Gamma$ be the induced base change map of skeleta. 
\end{block}
\begin{proposition}[Balancing condition]\label{balancing} For all edges $I'\in \pi_0(\Gamma'\setminus V(\CC'))$, write $\deg_{I'}\phi=\mathrm{length}(\phi(I))/\mathrm{length}(I')$. Then $I'\to \phi(I)$ is linear in the sense that $\phi_*\mu_{I'}=\frac{1}{\deg_I'\phi}\mu_I$. Moreover for all $I\in \pi_0(\Gamma\setminus V(\CC))$ we have $$[k':k]=\sum_{I'\in \pi_0(\phi^{-1}I)}\deg_{I'}\phi.$$
\end{proposition}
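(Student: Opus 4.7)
The plan is to reduce the statement to a local analysis at a single edge of $\Gamma$ and then invoke the harmonic morphism theory developed in \cite{W}, to which this proposition is essentially a direct specialization.

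First, I would fix an edge $I \in \pi_0(\Gamma \setminus V(\CC))$ corresponding to a node $\xi \in \CC_s$ lying on two components $C_x, C_{x'}$ of multiplicities $m(x), m(x')$. Log-regularity of $\CC^{\dagger}$ at $\xi$ gives, \'etale locally, a presentation
$$\widehat{\O}_{\CC,\xi} \cong k^{\circ}\FFbracket u,v\rrbracket/(u^{m(x)}v^{m(x')} - c\pi)$$
for a unit $c$ and a uniformizer $\pi \in k$. The interior of $I$ parametrizes the monomial valuations on $k(C)$ which extend $v_k$ and send $(u,v)$ to $(a,b) \in \R^2_{\ge 0}$ with $m(x)a + m(x')b = 1$; under this identification, the conformal metric on $I$ is the lattice length on this segment (see \cite[\S4]{W}).

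Next, I would use the toroidal assumption on $\CC'$ to describe $\phi^{-1}(I)$. Each node $\xi'_j \in \CC'_s$ lying above $\xi$ yields an edge $I'_j$ with an analogous presentation relative to the uniformizer $\pi'$ of $k'$. The inclusion $k(C) \hookrightarrow k(C')$ combined with $v_k(\pi) = e(k'/k)\cdot v_{k'}(\pi)$ shows that each restriction $I'_j \to I$ is an affine map of intervals with constant slope $\deg_{I'_j}\phi = \mathrm{length}(I)/\mathrm{length}(I'_j)$; this proves both the linearity statement and the pushforward-of-measure formula at once. For the balancing identity, I would compare degrees above the generic point of $I$: the map $\phi$ restricts to a finite flat cover of affinoid annuli of total degree $[k':k]$ (the degree equals the field extension degree on the generic fiber $C' \to C$), which on the other hand decomposes as $\sum_j \deg_{I'_j}\phi$ over the nodes above $\xi$, with all inertia degrees equal to $1$ because $\widetilde{k}$ is algebraically closed.

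The main obstacle will be the careful bookkeeping of the two distinct normalizations of the potential metric, one relative to $v_k$ on $C^{\an}$ and the other relative to $v_{k'}$ on $C'^{\an}$: the ramification factor $e(k'/k)$ must enter $\deg_{I'_j}\phi$ correctly so that $\sum_j \deg_{I'_j}\phi$ evaluates to $[k':k]$ and not merely the sum of the $k'$-internal ramification indices at each node. This is the content of the monoid-theoretic dictionary established in \cite{W}, and once it is in place the argument becomes routine.
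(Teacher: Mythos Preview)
Your proposal is correct and takes essentially the same approach as the paper, whose proof is the one-line citation of \cite[\S3.24, \S4.7]{W} together with $\deg\pi=[k':k]$; you have simply unpacked what that citation contains. Two small cleanups: your identity $v_k(\pi)=e(k'/k)\cdot v_{k'}(\pi)$ is reversed, and in the paper's convention (\S\ref{ass sec bc}, \S\ref{galois deg p case}) $\CC'$ is treated as a toroidal $k^{\circ}$-model with the conformal metric on $\Gamma'$ computed relative to $k$, so there is in fact a single normalization throughout rather than the two you describe.
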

\begin{proof}
	This follows from \cite[\S3.24, \S4.7]{W} and the fact that $\deg\pi=[k':k]$.\end{proof}
	\begin{remark} It follows moreover from \cite[\S3.24]{W} that for each $x'\in \Gamma'$ and each edge $I$ as in \ref{balancing} incident to $\phi(x)$ the sum $\deg_x\phi\coloneqq \sum_{I'/I}\deg_{I'}\phi$, summing over edges $I'$ above $I$, is independent of the choice of $I$. In fact $\deg_{x'}\phi=[\H(x'):\H(x)]=\deg_{x'}\pi^{\an}$ by \cite[[\S3.5.4.2 \& \S4.2.3(iii)]{ducros}.
	\end{remark}
	\begin{block} Given a point $x\in \Gamma$, we write $\mathrm{Br}_{\Gamma}(x)\coloneqq \colim_{U}\pi_0(U\setminus \{x\})$ where the colimit runs over neighbourhoods of $x$, for the set of \emph{branches} at $x$, i.e. the set of germs of intervals emanating from $x$. Given a piecewise linear function $F$ on $\Gamma$ and a branch $b\in\mathrm{Br}_{\Gamma}(x)$ at a point $x\in \Gamma$, we write $\partial_bF$ for the outgoing slope of $F$ in the direction of $b$ and we write $\Delta(F)=\sum_{x\in \Gamma}\sum_{b\in\mathrm{Br}_{\Gamma}(x)}\partial_bF[x]$ for the combinatorial Laplacian of $F$, which is a combinatorial divisor on $\Gamma$, i.e. a formal linear combination of points on $\Gamma$. 
We define a pullback of combinatorial divisors by letting $\phi^*[x]=\sum_{x'\in\phi^{-1}(x)}\deg_{x'}\phi[x']$ and extending linearly.
	\end{block}
	\begin{proposition}[Different function for base change of skeleta]\label{rh} Assumptions as in \ref{ass sec bc}. Let $\delta:\Gamma'\to\R_{\ge0}$ be the unique function which is affine-linear on each edge $I\in \pi_0(\Gamma'\setminus V(\CC'))$ and satisfies $$\delta(x')=\frac{1}{m(x')}\delta^{\log}_{\O_{\CC',x'}/\O_{\CC,\pi(x)}}$$ for all $x'\in V(\CC')$.
	Then:
	\begin{enumerate}
		\item For each type $2$ point $x'\in \Gamma'$ we have $\delta(x')=\frac{1}{m(x')}\delta^{\log}_{\H(x')/\H(x)}$ where $x=\pi^{\an}(x')$,
		\item $\delta$ has integral slopes,
		\item we have the Riemann-Hurwitz formula $\Delta(\delta)=K_{\Gamma'}-\phi^*K_{\Gamma}$.
	\end{enumerate}		
	\end{proposition}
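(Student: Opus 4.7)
The plan is to derive all three claims from the results of \cite{W} together with the local theory of log-differents developed in Section \ref{sec: diff}.

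For Part (1), the first observation is that at a vertex $x'\in V(\CC')$ the completed local ring $\widehat{\O}_{\CC',x'}$ (localised at the generic point of the component $C_{x'}$) is naturally identified with the valuation ring $\H(x')^{\circ}$, and similarly for the image vertex $x=\pi^{\an}(x')$. By Proposition \ref{propn: (log) differents}(c), passing to completions preserves the log-different, so the defining formula already reads $\delta(x')=\frac{1}{m(x')}\delta^{\log}_{\H(x')/\H(x)}$ at every vertex. For a general type $2$ point $x'\in \Gamma'$, I would use the correspondence of Remark \ref{correspondence vertex sets and proper models} to refine $\CC'$ (and correspondingly $\CC$) so that $x'$ becomes a vertex of a new toroidal model; one then has to verify that the value prescribed by this vertex formula agrees with the linear interpolation from the endpoint values on the original edge.

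For Part (2), integrality of slopes is a local statement on edges. For each edge $I'\in \pi_0(\Gamma'\setminus V(\CC'))$ arising as the skeleton of a node of $(\CC')_s$, the completed local ring at that node admits an explicit toric presentation $k'^{\circ}\FFbracket u,v\rrbracket/(uv-\pi^{n})$ for some $n$, and analogously for the image node in $\CC$. The extension $\O_{\CC',\eta}/\O_{\CC,\phi(\eta)}$ at a divisorial interior point $\eta$ of $I'$ fits into a tower that can be dissected using Proposition \ref{propn: (log) differents}(b) together with the relation (a) between the different and the log-different. A direct toric computation in these coordinates then shows that the outgoing slope $\partial_{b'}\delta$ along any branch $b'$ is an integer. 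For Part (3), I would invoke the skeletal Riemann--Hurwitz formalism of \cite{W}, which yields the divisor identity $\Delta(\delta)=K_{\Gamma'}-\phi^*K_{\Gamma}$ once (1) and (2) are in hand: the vertex contributions are controlled by the classical Riemann--Hurwitz formula \ref{classical RH} for the induced cover of residual curves $C_{x'}\to C_x$, while the incident slope contributions record the log-different along nodal degenerations.

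The main obstacle is the compatibility statement tacitly needed for (1): showing that refining $\CC'$ along an edge (via a sequence of nodal blowups, together with a blowup specialising $x'$ in the sense of \ref{blowup sequence}) produces a log-different value at the new vertex which agrees with linear interpolation between the endpoint values. This reduces, via transitivity of the log-different (Proposition \ref{propn: (log) differents}(b)), to an explicit local computation in the toric local rings describing nodal and smooth blowups; the arithmetic input is the formula of Proposition \ref{propn: (log) differents}(a) relating $\delta^{\log}$ to $\delta$ and the ramification index. Once this compatibility is secured, Parts (2) and (3) follow essentially formally from the machinery of \cite{W}.
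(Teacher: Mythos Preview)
Your proposal is correct in substance and ultimately rests on the same source as the paper, but it is considerably more elaborate than necessary. The paper's own proof is a single line: a citation to \cite[\S8.7]{W} together with Proposition~\ref{propn: (log) differents}(c). That is, all three assertions are already established in the companion paper \cite{W}, where the different function is set up intrinsically (in terms of $\H(x')/\H(x)$); the only thing to add here is that this intrinsic value matches the local-ring formula $\frac{1}{m(x')}\delta^{\log}_{\O_{\CC',x'}/\O_{\CC,x}}$ at vertices, and that is precisely the completion-invariance statement of Proposition~\ref{propn: (log) differents}(c).

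Your route instead starts from the vertex formula and attempts to propagate it to arbitrary type~2 points by refining the model, which is why you are forced to confront the ``main obstacle'' of compatibility with linear interpolation under nodal blowups. That obstacle is genuine, but it is exactly part of what \cite[\S8.7]{W} already proves; once you are willing to invoke \cite{W} for part~(3), there is no economy in re-deriving (1) and (2) by hand via toric local computations. Your approach would work and has the merit of making the mechanism visible, but it duplicates material that the paper has deliberately offloaded to \cite{W}.
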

	\begin{proof}
		 \cite[\S8.7]{W} and \ref{propn: (log) differents} (c).
	\end{proof}
	\begin{proposition} For all $x'\in \Gamma'$ we have $\delta(x)\le v_{k}[k':k]$.\label{bounds different}
	\end{proposition}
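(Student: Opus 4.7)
The function $\delta$ is continuous and affine-linear on each of the finitely many edges of $\Gamma'$, hence its supremum over $\Gamma'$ is attained at some vertex $x'\in V(\CC')$. The stated bound is vacuous in equal characteristic (where $[k':k]$ vanishes in $k$ whenever it carries any wild contribution), so I focus on mixed characteristic. Setting $x=\pi^{\an}(x')$ and applying Proposition~\ref{propn: (log) differents}(c) to the completions $\O_{\CC',x'}\hookrightarrow\H(x')^{\circ}$ and $\O_{\CC,x}\hookrightarrow\H(x)^{\circ}$ yields
\[
\delta(x')=\frac{1}{m(x')}\,\delta^{\log}_{\H(x')/\H(x)}.
\]
Proposition~\ref{propn: (log) differents}(f) applied to the finite separable extension $\H(x')/\H(x)$ then gives
\[
\delta^{\log}_{\H(x')/\H(x)}\le v_{\H(x')}\bigl(e\cdot f^{i}\bigr),
\]
where $e=e(\H(x')/\H(x))$ and $f^{i}=f^{i}(\H(x')/\H(x))$.

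The next step converts the $v_{\H(x')}$-valuation into a $v_{k}$-valuation. Since $m(x')=e(\H(x')/k)$, the identity $v_{\H(x')}(n)=m(x')\,v_{k}(n)$ holds for every integer $n\in\Z$; dividing by $m(x')$ in the previous inequality yields
\[
\delta(x')\le v_{k}(e\cdot f^{i}).
\]

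It remains to compare the integer $e\cdot f^{i}$ with $[k':k]$ in $p$-adic valuation. Because $\H(x')/\H(x)$ is defectless $ef=[\H(x'):\H(x)]$, and $f^{i}\mid f$, so $ef^{i}$ divides the local degree $[\H(x'):\H(x)]$, which in turn is bounded above by $[k':k]$ via the balancing condition of Proposition~\ref{balancing}. The final ingredient is that the $p$-part of $[\H(x'):\H(x)]$ divides the $p$-part of $[k':k]$. I plan to extract this by reducing to the Galois case: choose a finite Galois extension $L/k$ containing $k'$ and a point $\bar{x}$ of the base-changed skeleton $\Gamma_{L}$ above $x'$; in the Galois extension $\H(\bar{x})/\H(x)$ the local degree divides the global degree $[L:k]$, and a tower-type argument using Proposition~\ref{propn: (log) differents}(b) combined with the fact that the only $p$-ramification of $L/k$ is already inherited from $k'/k$ descends the required divisibility back to $[k':k]$.

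The main technical obstacle is this last $p$-adic divisibility step, because the outright inequality $[\H(x'):\H(x)]\le[k':k]$ does not directly imply a $v_{p}$-inequality and divisibility $[\H(x'):\H(x)]\mid[k':k]$ can fail in the non-Galois setting; everything else is a direct assembly of Proposition~\ref{propn: (log) differents}(c), (f) and the multiplicativity of the normalised valuation in towers.
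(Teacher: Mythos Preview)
Your overall strategy is exactly the paper's: reduce to a vertex and invoke Proposition~\ref{propn: (log) differents}(f). You have also correctly located the only nontrivial step, namely passing from $\delta(x')\le v_k(e\,f^i)$ to $\delta(x')\le v_k([k':k])$. The paper's one-line proof does not spell this out either.

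Your proposed fix, however, does not work. Two issues: first, the claim you aim for---that the $p$-part of $[\H(x'):\H(x)]$ divides the $p$-part of $[k':k]$---is stronger than what is needed and need not hold; what you actually need is the weaker statement that the $p$-power $e_w f^i$ is bounded by the $p$-part of $[k':k]$. Second, the Galois-closure route is built on the assertion that ``the only $p$-ramification of $L/k$ is already inherited from $k'/k$,'' which is false in general: the Galois closure of a separable extension can acquire additional wild ramification.

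Here is a clean way to close the gap. Observe that $\H(x')$ is a direct factor of $\H(x)\otimes_k k'$, so $\H(x')=\H(x)\cdot k'$ is a compositum. Let $(k')^{t}\subset k'$ be the maximal tame subextension of $k'/k$; since tameness is preserved under base change, $\H(x)\cdot (k')^{t}$ is tame over $\H(x)$ and hence lies inside the maximal tame subextension $\H(x)^{t}$ of $\H(x')/\H(x)$. Now the wild degree factors as
\[
e_w\bigl(\H(x')/\H(x)\bigr)\cdot f^{i}\bigl(\H(x')/\H(x)\bigr)=[\H(x'):\H(x)^{t}]\le [\H(x'):\H(x)\cdot(k')^{t}]\le [k':(k')^{t}].
\]
Because $\tilde{k}$ is algebraically closed, $k'/k$ is totally ramified and $[k':(k')^{t}]$ is exactly the $p$-part of $[k':k]$. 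Since the left-hand side is a $p$-power, this gives $v_p(e\,f^i)\le v_p([k':k])$, hence $v_k(e\,f^i)\le v_k([k':k])$, completing the argument.
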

	\begin{proof}
		This follows from \ref{propn: (log) differents} (f).
	\end{proof}
	\begin{block} Any type $2$ point $x$ admits a seperable parameter \cite[\S8.3]{T16}, i.e. an element $t\in (H(x)^{\circ})^\times$ such that $\H(x)/k(t)$ is residually seperable (consider any lift of a seperable transcendence basis for $\widetilde{\H(x)}/\tilde{k}$). A parameter $t$ is called tame if $\H(x)/k(t)$ is tamely ramified. Since we have an equality of ramification indices $e(\H(x)/k(t))=e(\H(x)/k)=m(x)$, tame type $2$ points are the same as type $2$ points which satisfy $p\nmid m(x)$. The closure of the tame type $2$ points of $\Gamma$ is called the \emph{temperate part} $\Gamma^t$ of $\Gamma$ \cite[\S4.3]{JN}.
	\end{block}
\begin{proposition} 
		\label{prop: temp} \label{constant along temperate part} The function $\delta\vert_{\phi^{-1}\Gamma^{t}}$ is constant and equal to $\frac{1}{[k':k]}\delta_{k'/k}^{\log}$.
\end{proposition}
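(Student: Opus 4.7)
The plan is to reduce the statement to tame divisorial points via continuity, and then carry out a tower computation of log-differents using an auxiliary ``unramified-with-transcendental-residue'' subfield. Concretely, since $\delta$ is piecewise affine on the compact graph $\Gamma'$ hence continuous, and since $\Gamma^{t}$ is by definition the closure of tame divisorial type $2$ points (those with $p\nmid m(x)$), it suffices by continuity to verify the formula $\delta(x')=\delta^{\log}_{k'/k}/[k':k]$ for each preimage $x'\in\phi^{-1}(x)$ of such a tame divisorial point $x$.

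For a fixed such pair $(x,x')$, I would introduce the following intermediate fields. Let $t\in(\H(x)^\circ)^\times$ be a tame separable parameter and set $K=\widehat{k(t)}$, the completion at the Gauss valuation extending $v_k$ with $v(t)=0$; viewed inside $\H(x)$, this satisfies $e(K/k)=1$, and $\H(x)/K$ is a finite tame Kummer extension of degree $m(x)$. Let $K'=\widehat{k'(t)}$, identified with a factor of $K\hat{\otimes}_{k}k'$ inside $\H(x')$. I would then verify three key properties:
\begin{enumerate}[label=(\roman*)]
  \item $K'/K$ has the same defining polynomial as $k'/k$, hence $e(K'/K)=[k':k]$ and $\delta^{\log}_{K'/K}=\delta^{\log}_{k'/k}$;
  \item $\delta^{\log}_{\H(x)/K}=0$ by Proposition \ref{propn: (log) differents}(e), using the tameness of $\H(x)/K$;
  \item $\delta^{\log}_{\H(x')/K'}=0$: since $p\nmid m(x)$, the tame Kummer structure of $\H(x)/K$ is preserved under base change to $K'$, so each factor of $\H(x)\hat{\otimes}_{K}K'$ is tame over $K'$.
\end{enumerate}
Applying the tower formula \ref{propn: (log) differents}(b) to the finite separable towers $\H(x')/\H(x)/K$ and $\H(x')/K'/K$ and equating the two resulting expressions for $\delta^{\log}_{\H(x')/K}$ then yields
\[
\delta^{\log}_{\H(x')/\H(x)}=e(\H(x')/K')\cdot\delta^{\log}_{k'/k}.
\]

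To conclude, I would use $e(k'/k)=[k':k]$ (which holds since $\tilde{k}$ is algebraically closed, so $k'/k$ is totally ramified), whence $m(x')=e(\H(x')/k)=e(\H(x')/K')\cdot e(K'/k)=e(\H(x')/K')\cdot[k':k]$. Substituting into Proposition \ref{rh}(1) then produces
\[
\delta(x')=\frac{\delta^{\log}_{\H(x')/\H(x)}}{m(x')}=\frac{\delta^{\log}_{k'/k}}{[k':k]},
\]
as required. The main technical obstacle will be step (iii): verifying that tameness propagates from $\H(x)/K$ to each factor of its base change to $K'$. This relies on the Kummer description of the tame extension together with $p\nmid m(x)$, since the base change of a Kummer extension whose exponent is prime to the residue characteristic splits into Kummer extensions of the same type. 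Step (i), while conceptually routine, also demands care in identifying $K'$ with a factor of $K\hat{\otimes}_{k}k'$ and directly checking that log-differents are invariant under the ``trivially ramified'' subextension $K/k$.
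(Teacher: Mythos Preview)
Your proposal is correct and follows essentially the same approach as the paper: reduce to tame divisorial points by continuity, choose a tame parameter $t$, and compare the two towers $\H(x')/\H(x)/K$ and $\H(x')/K'/K$ via the additivity of the log-different (Proposition~\ref{propn: (log) differents}(b)), using that both $\H(x)/K$ and $\H(x')/K'$ are tame and that $\delta^{\log}_{K'/K}=\delta^{\log}_{k'/k}$. The only cosmetic difference is that the paper works directly with the local rings $\O_{\CC,x}$, $\O_{\CC',x'}$, $k(t)^\circ$, $k'(t)^\circ$ rather than their completions, and phrases your step~(i) as the isomorphism $\Omega^{\log}_{k'^\circ/k^\circ}\otimes_{k^\circ}k(t)^\circ\cong\Omega^{\log}_{k'(t)^\circ/k(t)^\circ}$; by Proposition~\ref{propn: (log) differents}(c) the two formulations are equivalent.
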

\begin{proof} 
Suppose $x\in \Gamma$ be a tame divisorial point. After a subdivision we can suppose that $x\in V(\CC)$.   
Choose a parameter $t\in\O_{\CC,x}^\times$, i.e. an element such that $\O_{\CC,x}/k(t)$ is tamely ramified. Then the normalised base change $\O_{\CC',x}/k'(t)^{\circ}$ is tamely ramified too. Now apply \ref{propn: (log) differents} (b) to the towers $\O_{\CC',x'}/\O_{\CC,x}/k(t)^{\circ}$ and $\O_{\CC',x'}/k'(t)^{\circ}/k(t)^{\circ}$, and use that $\Omega_{k'^{\circ}/k^{\circ}}^{\log}\otimes_{k^{\circ}}k(t)^{\circ}\cong \Omega_{k'(t)^{\circ}/k(t)^{\circ}}^{\log}$.
\end{proof}
\begin{remark} An argument similar to the proof of Proposition \ref{constant along temperate part} gives the following relation between the weight functions of \cite{MN} over $k'$ and $k$: $$\wt^{k'}_{\omega'}(x)=[k':k]\wt^k_{\omega}(\pi(x))+\delta^\log_{k'/k}.$$ When $k'/k$ is tame this recovers \cite[Proposition 4.5.1]{JN}.
\end{remark}
\begin{proposition}[Comparison with classical Riemann-Hurwitz formula]\label{comparison RH} Suppose that $x'\in \Gamma'$ is a type $2$ point such that $\H(x')/\H(x)$ is residually seperable, where $x=\phi(x')$. Let $b'\in \mathrm{Br}_{\Gamma'}(x')$ and write $b=\phi(b')\in \mathrm{Br}_{\Gamma}$. Let $C_{x'}\to C_x$ be the associated map of residual curves and let $\tilde{b'}\in C_{x'}$ and $\tilde{b}\in C_x$ be the points corresponding to $b'$ and $b$ respectively. Then we have \begin{equation}
    \partial_{b'}\delta=m(x')\delta^{\log}_{\O_{C_{x'},\tilde{b'}}/\O_{C_{x},\tilde{b}}}.\label{eqn: comparison rh}
  \end{equation}
\end{proposition}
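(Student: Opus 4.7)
The plan is to compute the slope by enlarging the snc models to introduce a new vertex $y'\in V(\Gamma')$ on the edge of $\Gamma'$ emanating from $x'$ in direction $b'$, then to combine the defining formula for $\delta$ at vertices with the transitivity of log-differents to identify the resulting difference quotient with the residual log-different at $\tilde b'$.

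Replacing $\CC$ and $\CC'$ with their minimal \emph{snc} resolutions (\ref{minimal models}), we may assume both models are \emph{snc}. Then $b'$ corresponds to a node $\tilde b'\in \CC'_s$ where $C_{x'}$ meets an adjacent component $C_{x'_{b'}}$, and $\tilde b=\pi_s(\tilde b')\in\CC_s$ is the corresponding node on $C_x\cap C_{x_b}$. Let $\DD\to\CC$ be the blowup at $\tilde b$, and set $\DD' := (\DD\otimes_{k^\circ}(k')^\circ)'$. Then $\DD'$ is obtained from $\CC'$ by jointly blowing up the preimages of $\tilde b$, and the exceptional divisor above $\tilde b'$ has generic point $y'\in V(\DD')$ lying on the edge of $\Gamma'$ in direction $b'$, mapping to the generic point $y\in V(\DD)$ of the exceptional of $\DD\to\CC$. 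By \ref{blowup sequence}, $m(y)=m(x)+m(x_b)$ and $m(y')=m(x')+m(x'_{b'})$.

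Affine-linearity of $\delta$ on $[x',y']\subset\sk(\DD')$ yields
\begin{equation*}
\partial_{b'}\delta=\frac{\delta(y')-\delta(x')}{d(x',y')}.
\end{equation*}
By the conformal metric on snc skeleta (iterated nodal blowup on the $C_{x'}$-side combined with a telescoping sum of pairwise inverse-product distances), one checks $d(x',y')=\tfrac{1}{m(x')\,m(y')}$. For the numerator, \ref{propn: (log) differents}(c) lets us pass to the complete toroidal local rings at the nodes $\tilde b'$ (in $\CC'$) and $\tilde y'$ (the node of $\DD'_s$ between $C_{x'}$ and the exceptional $F'$ above $\tilde b'$). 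In standard toroidal presentations and using the transitivity formula \ref{propn: (log) differents}(b), the residual separability hypothesis---which ensures the residue extension near $\tilde b'$ is \'etale---allows one to separate the ``vertical'' contributions (already captured by $\delta(x')$) from the ``horizontal'' contribution in the direction of $\tilde b'$, and identify
\begin{equation*}
\delta(y')-\delta(x')=\frac{\delta^{\log}_{\O_{C_{x'},\tilde b'}/\O_{C_x,\tilde b}}}{m(y')}.
\end{equation*}
Substituting then gives $\partial_{b'}\delta=m(x')\,\delta^{\log}_{\O_{C_{x'},\tilde b'}/\O_{C_x,\tilde b}}$.

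The main obstacle is the last identity. Tracking the log-differents through the two 2-dimensional toroidal local rings $\hat\O_{\CC',\tilde b'}$ and $\hat\O_{\DD',\tilde y'}$ and comparing with their restrictions to the codim-1 points $x'$ and $y'$ requires careful bookkeeping of the ramification indices along each branch at each node. The residual separability hypothesis is essential: it guarantees that the horizontal residue field extension is \'etale, so that the contribution along $\tilde b'$ separates cleanly from the one along the opposite branch and is precisely the residual log-different. A more conceptual proof may alternatively be obtained from the Grothendieck-Verdier residue theory of Remark \ref{rmk: duality}, applied to the residual curve map $C_{x'}\to C_x$.
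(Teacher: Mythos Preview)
Your approach is genuinely different from the paper's. The paper does not compute a difference quotient at all: it invokes the Poincar\'e--Lelong formula of \cite[\S6.19]{W} to write $\partial_{b'}\delta = m(x')\,\ord_{\tilde b'}\!\big(\sp_*\tau_{\CC'/\CC}\big)$, where $\tau_{\CC'/\CC}$ is the global trace section of $\omega_{\CC'^\dagger/\CC^\dagger}$; then it uses the computation \cite[\S7.32]{W} to identify the specialisation $\sp_*\tau_{\CC'/\CC}$ with the trace element $\tau_{C_{x'}/C_x}$ of the residual cover (this is exactly where residual separability enters), and finally relates the order of vanishing of that trace element to $\delta^{\log}$ via Proposition~\ref{propn: (log) differents}(a). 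No blowup, no difference quotient, no toroidal bookkeeping at a node.

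Your route has a real gap: the displayed identity
\[
\delta(y')-\delta(x')=\frac{\delta^{\log}_{\O_{C_{x'},\tilde b'}/\O_{C_x,\tilde b}}}{m(y')}
\]
is the entire content of the proposition, and you do not prove it. Unwinding the definitions, it says
\[
\delta^{\log}_{\O_{\DD',y'}/\O_{\DD,y}}=\frac{m(y')}{m(x')}\,\delta^{\log}_{\O_{\CC',x'}/\O_{\CC,x}}+\delta^{\log}_{\O_{C_{x'},\tilde b'}/\O_{C_x,\tilde b}},
\]
and neither transitivity (\ref{propn: (log) differents}(b)) nor completion (\ref{propn: (log) differents}(c)) gives this: transitivity applies to a \emph{tower} of DVR extensions, whereas here you must compare two \emph{different} codimension-one localisations of a two-dimensional toroidal ring. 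The phrase ``separate the vertical contributions from the horizontal contribution'' is precisely the step that requires either an explicit local computation in toroidal coordinates (which you have not supplied) or a conceptual input such as the specialisation of the trace section --- which is what the paper actually does. A secondary issue: you assert that $\DD'=(\DD\otimes_{k^\circ}(k')^\circ)'$ coincides with the blowup of $\CC'$ at the preimages of $\tilde b$, and that the result is again \emph{snc}; normalised base change does not commute with blowup in general, so this too needs an argument (or the hypothesis of \ref{ass sec bc} must be re-verified for $\DD$).
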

\begin{proof} By the Poincar\'e-Lelong formula of \cite[\S6.19]{W} and its proof one has $\partial_{b'}\delta=m(x')\ord_{\tilde{b'}}\sp_*\tau_{\CC'/\CC}$
where the trace section $\tau_{\CC'/\CC}\in \Gamma(\CC',\omega_{\CC'^{\dagger}/\CC^{\dagger}})$ is viewed as a section the logarithmic canonical bundle, see \cite[\S7.33]{W}. By the computation \cite[\S7.32]{W} it follows that $$\sp_*\omega_{\CC'^{\dagger}/\CC^{\dagger}}=\omega_{C_{x'}^\dagger/C_x^{\dagger}},$$ and moreover $\sp_*\tau_{\CC'/\CC}=\tau_{C_{x'}/C_{x}}$ is the trace element of $\omega_{C_{x'}/C_x}$ (here we use the assumption that $C_{x'}\to C_x$ is seperable). By the comparison formula of \cite[\S7.28]{W} it follows $$\ord_{\tilde{b'}}\sp_*\tau_{\CC'/\CC}=\ord_{\tilde{b}}\tau_{C_x/C_{f(x)}}+1-e({\O_{C_x,\tilde{b}}/\O_{C_{f(x)},f(\tilde{b})}})$$
  By \ref{2 descriptions diff}(ii) and \ref{propn: (log) differents}(a,e) the latter equals $\delta^{\log}_{\O_{C_x,\tilde{b}}/\O_{C_{f(x)},f(\tilde{b})}}$.
\end{proof}
\begin{block}[Galois degree $p$ case]\label{galois deg p case}
Some simplifications arise if $k'/k$ is assumed Galois of degree $p$. Let $x\in \Gamma$, then we have two cases: $\#\phi^{-1}(x)=p$ or $\#\phi^{-1}(x')=1$, that is $x$ splits in $p$ preimages or topologically ramifies. Note that $\H(x')/\H(x)$ is trivial if and only if $\pi^{\an}$ is a local isomorphism at $x'$, by \cite[Theorem 3.4.1]{B93}. Then $\phi$ is an isometry on the splitting locus, and the metric expands by a factor $p$ on the topological ramification locus. Beware that the metric on $(C')^{\an}$ is computed with respect to $k$ and not $k'$ (see for instance Example \ref{example II}). A case-by-case analysis using \ref{propn: (log) differents} (e) shows that the interior of the topological ramification locus is the locus where $\delta>0$. 
\end{block}

\section{Reduction types of elliptic curves}
\label{sec:ell curve}
\begin{block}
Throughout this section, $E$ is an elliptic curve, its neutral element is denoted by $0\in E(k)$. Let $\mathscr{E}_{\min}$ be the minimal \emph{snc} model and write $\Gamma_{\min}\subset E^{\an}$ for its skeleton.
The combinatorial types of $\Gamma_{\min}$ are classified by the Kodaira-N\'eron classification, summarised in Table \ref{kodaira neron} below, also see \cite[Theorem 8.2]{silverman advanced topics} for details. In Table \ref{kodaira neron}, we have moreover indicated the vertices $V(\Gamma_{\min})$ together with their multiplicity. 

 Let $\omega$ denote a choice of \emph{invariant differential}, i.e. a generator of the $1$-dimensional $k$-vector space $H^0(E,\omega_{E})$; recall $\div\omega_E=0$. The minimum locus of the weight function $\wt_{\omega_{E}}$ introduced in \cite{MN} is called the essential skeleton and denoted by $\sk(E)$, it is coloured red in Table \ref{kodaira neron}. Beware that the essential skeleton is not a skeleton in the sense of this paper. The weight function $\wt_{\omega_E}$ is \emph{strictly increasing} away from $\sk(E)$, see \cite[\S4.4.5]{MN}.
\end{block}
\refstepcounter{equation}
\begin{table}[ht!]\label{kodaira neron}
\centering
\begin{tabular}{c|c|c}
\emph{Reduction type} & \emph{Kodaira symbol} & \emph{Minimal skeleton} $\Gamma_{\min}$\\
\hline 
Good reduction & $I_0$ &  \begin{tikzpicture}\draw[fill=red] (0,0) circle (2pt);
\node at (0.25,0.25) {$1$};\end{tikzpicture}  \\
\hline
Multiplicative reduction & $I_{n}$ &   
	\begin{tikzpicture}\draw[red,thick] (0,0) circle (0.7);
	\end{tikzpicture}
 
   \\
\hline
Additive reduction &$I_{n}^*$&  	
 \begin{tikzpicture}
	\node at (2.25,0.25) {$1$};
	\node at (-1.75,0.25) {$1$};
	\node at (1.25,0.25) {$2$};
	\node at (-0.75,0.25) {$2$};
	\node at (1.25,1.25) {$1$};
	\node at (-0.75,1.25) {$1$};
	\draw[fill] (2,0) circle (2pt);
	\draw[fill] (1,0) circle (2pt);
	\draw[fill] (1,1) circle (2pt);
	\draw[fill] (-2,0) circle (2pt);
	\draw[fill] (-1,0) circle (2pt);
	\draw[fill] (-1,1) circle (2pt);
	\draw[thick] (2,0)--(1,0)--(1,1);
	\draw[thick] (-2,0)--(-1,0)--(-1,1);
	\draw[thick,red] (-1,0) -- (-0.5,0);
	\draw[thick,red] (1,0) -- (0.5,0);
	\draw[thick,red,dashed] (-0.5,0)--(0.5,0);
	\draw[fill=red] (-1,0) circle (2pt);
	\draw[fill=red] (1,0) circle (2pt);
\end{tikzpicture}  
 \\
&$II,\, II^*$&  	
 \begin{tikzpicture}
	\node at (0.25,0.25) {$6$};
	\node at (1.25,0.25) {$1$};
	\node at (-0.75,0.25) {$2$};
	\node at (0.25,1.25) {$3$};
	\draw[fill] (-1,0) circle (2pt);
	\draw[fill] (1,0) circle (2pt);
	\draw[fill] (0,1) circle (2pt);
	\draw[thick] (-1,0) -- (1,0);
	\draw[thick] (0,0) -- (0,1);
	\draw[fill=red] (0,0) circle (2pt);
\end{tikzpicture}  
 \\
&$III,\, III^*$&   \begin{tikzpicture}
	\node at (0.25,0.25) {$4$};
	\node at (1.25,0.25) {$1$};
	\node at (-0.75,0.25) {$1$};
	\node at (0.25,1.25) {$2$};
	\draw[fill] (-1,0) circle (2pt);
	\draw[fill] (1,0) circle (2pt);
	\draw[fill] (0,1) circle (2pt);
	\draw[thick] (-1,0) -- (1,0);
	\draw[thick] (0,0) -- (0,1);
	\draw[fill=red] (0,0) circle (2pt);
\end{tikzpicture}   \\
&$IV,\, IV^*$&  
 \begin{tikzpicture}
	\node at (0.25,0.25) {$3$};
	\node at (1.25,0.25) {$1$};
	\node at (-0.75,0.25) {$1$};
	\node at (0.25,1.25) {$1$};
	\draw[fill] (-1,0) circle (2pt);
	\draw[fill] (1,0) circle (2pt);
	\draw[fill] (0,1) circle (2pt);
	\draw[thick] (-1,0) -- (1,0);
	\draw[thick] (0,0) -- (0,1);
	\draw[fill=red] (0,0) circle (2pt);
\end{tikzpicture}\\\\\hline

\end{tabular}
	\caption{\emph{Kodaira-N\'eron classification. For each reduction type, we have drawn the minimal skeleton $\Gamma$. The vertices of $\Gamma$ are marked, together with their multiplicity. The essential skeleton is coloured in red, it is the locus where $\wt_{\omega}$ is minimal.}}
\end{table}

\begin{example}[A $2$-adic elliptic curve with potentially good supersingular reduction.] \label{example II}
Let $E$ be the elliptic curve over $k=\Q_2^{\mathrm{ur}}$ with affine equation $y^2=x^3+2$. This is a minimal Weierstrass equation whose reduction mod $2$ yields the cusp $\tilde{y}^2=\tilde{x}^3$, and by Tate's algorithm $E$ has Kodaira reduction type $II$. The minimal skeleton $\Gamma$ as in Table \ref{kodaira neron} is computed by the embedded resolution of a cuspidal singularity, see for instance \cite[Figure 17, p.406]{Liu}. 
Over the wild Kummer extension $k'=\Q_2(\alpha)$ where $\alpha^2=2$, $E$ ecquires good reduction: indeed, applying the coordinate change $(x_1,y_1)=(x/2,\frac1{2\alpha}(y-\alpha))$ we obtain that $E_{k'}$ admits the affine equation $y_1(y_1+1)=x_0^3$, which is the affine equation of a proper \emph{smooth} model of $E_{k'}$ over $(k')^{\circ}=\Z_2[\alpha]$. 

\paragraph*{Step 1: Computation of the different $\delta_{k'/k}$.}
The different $\delta_{k'/k}$ is computed as follows. Write $G=\mathrm{Gal}(k'/k)$ and denote by $G=G_{-1}\supset G_0\supset G_1\supset \dots$ the lower numbering ramification filtration with unique ramification jump\footnote{Recall that $G_i$ is defined as $G_i=\{\sigma\in G:v_{k'}(\sigma(\varpi')-\varpi')\ge i+1\}$. We have that $G_0$ is the inertia group, and $G_1$ is the wild inertia group. Since $\widetilde{k}$ is algebraically closed we have $G=G_0$. Recall that $s$ is called a \emph{jump} of the higher ramification filtration if $G_{s+1}\subset G_s$ is a proper inclusion. So if $G$ has prime order, there is a unique jump.} $j\ge 1$. 
%

An application of Hilbert's formula, see for instance \cite[IV.1]{local fields}, shows that $$\delta_{k'/k}=\sum_{i\ge 0} \left(|G_i|-1\right)=(p-1)(j+1)=j+1.$$ Since $G=\{1,\sigma\}$ where $\sigma$ is complex conjugation it follows that  $$\delta_{k'/k}=j+1=v_{k'}(\sigma(\alpha)-\alpha)=3,$$ and therefore $\delta_{k'/k}^{\log}=\delta_{k'/k}-e(k'/k)+1=3-2+1=2$.

\paragraph*{Step 2: Construction of a simultaneous skeleton.}
Let $z=\sp_{\mathscr{E}_{\min}}(0,\alpha)$ be the specialisation of the point $(x,y)=(0,\alpha)$ to $\mathscr{E}_{\min}$. Then $z$ lies on the unique component of multiplicity $2$. Let $$\mathscr{X}\to\mathscr{E}_{\min}$$ denote the blowup of $\mathscr{E}_{\min}$ in $z$. A computation of the blow-up algebra shows that $x_1=x/2$ is a local coordinate function on the exceptional component. The exceptional component has multiplicity two by \cite[9.2.23]{Liu}. Let us write $x_0\in E^{\an}$ for the divisorial point associated to this exceptional component.

View $E_{k'}$ as a $k$-curve and let $\pi:E_{k'}\to E$ denote the projection. By the previous points the unique point $x_0'\in E^{\an}_{k'}$ above $x_0$ corresponds to the smooth model of $E^{\an}_{k'}$. Let $\Gamma=\sk(\mathscr{X})$ and let $\Gamma'=(\pi^{\an})^{-1}\Gamma$. We denote by $y$ and $y'$ the unique nodes of $\Gamma$ and $\Gamma'$ respectively. The situation is depicted in Figure \ref{figure example II} below.

\refstepcounter{equation}
 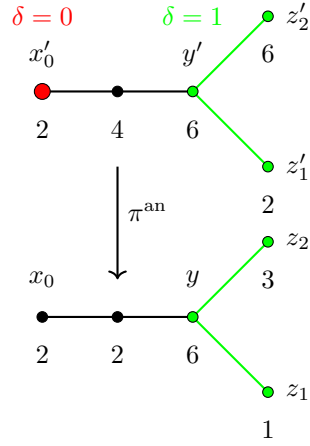
\begin{figure}[ht!]\label{figure example II}
\centering
	\begin{tikzpicture}

\draw[thick,->] (2,2)--(2,0.5);
\node at (2.4,1.4) {$\pi^{\an}$};

\node[green] at (3,4) {$\delta=1$};
\node[red] at (1,4) {$\delta=0$};
\node at (1,3.5) {$x_0'$};
\node at (1,0.5) {$x_0$};
\node at (3,3.5) {$y'$};
\node at (3,0.5) {$y$};

\node at (4.4,4) {$z_2'$};
\node at (4.4,2) {$z_1'$};
\node at (4.4,1) {$z_2$};
\node at (4.4,-1) {$z_1$};

\node at (1,2.5) {$2$};
\node at (2,2.5) {$4$};
\node at (3,2.5) {$6$};
\node at (4,1.5) {$2$};
\node at (4,3.5) {$6$};

\draw[thick] (1,3) -- (3,3);
\draw[green,thick] (4,4) -- (3,3)-- (4,2);

\node at (1,-0.5) {$2$};
\node at (2,-0.5) {$2$};
\node at (3,-0.5) {$6$};
\node at (4,-1.5) {$1$};
\node at (4,0.5) {$3$};
\draw[thick] (1,0) -- (3,0);
\draw[green,thick] (4,1) -- (3,0)-- (4,-1);

\draw[fill=red] (1,3) circle (3pt);
\draw[fill=black] (2,3) circle (2pt);
\draw[fill=green] (3,3) circle (2pt);
\draw[fill=green] (4,4) circle (2pt);
\draw[fill=green] (4,2) circle (2pt);

\draw[fill=black] (1,0) circle (2pt);
\draw[fill=black] (2,0) circle (2pt);
\draw[fill=green] (3,0) circle (2pt);
\draw[fill=green] (4,1) circle (2pt);
\draw[fill=green] (4,-1) circle (2pt);

	\end{tikzpicture}
\caption{\small \emph{Picture of $\Gamma'\to \Gamma.$ The points of $V(\mathscr{X})$ and their preimages along $\pi^{\an}$ are drawn and marked with their multiplicities. The different $\delta=\delta_{\pi^{\an}}$ varies linearly with slope $6$ from {\color{red}$\delta=0$} on $x'_0$} to {\color{green}$\delta=1$} on $y'$.} 
\end{figure}

Let us now check that the multiplicities of the drawn vertices of $\Gamma'$ are as claimed in Figure \ref{figure example II}.
Since $z_1$ and $z_2$ are tame type 2 points of $\Gamma$ we have that $m(z_1')=2$ and $m(z_2')=6$ as multiplicities over $k$, see for instance \cite[2.4]{FT}.
Moreover, we have that $d(x'_0,z_1')=d(x_0,z_1)/2=1/4$ by the balancing condition of \ref{balancing}. Since $m(x'_0)=m(z_1')=2$ and $d(x'_0,z_1')=1/4$ it follows by \ref{specialisation neat} that the interval $[x_0',z_1']$ is \emph{neat}, and so by $d(y',z')=d(y,z)/2=1/12$ and by Lemma \ref{nc interval multiplicities} we have $m(y')=6$.


Note that $\chi(\Gamma')=1-3+3+1=0=\chi(E')$, and so by the criterion of \ref{thm: enlarging skeleta} it follows that $\Gamma'\to\Gamma$ is a simultaneous skeleton, and the normalised base change of $\mathscr{X}$ to $(k')^{\circ}$ is toroidal (in fact \emph{snc}, though we do not need this so we omit the verification).

\paragraph*{Step 3: Study of behaviour of the analytic different.}We can now deduce from Proposition \ref{rh} that the different is linear from $x'_0$ to $y'$. Since $\H(x'_0)/\H(x_0)$ is unramified we have that $\delta_{\H(x'_0)/\H(x_0)}^{\log}=0$. Since $y$ is temperate we have by Proposition \ref{constant along temperate part} that $\delta(y)=\frac{1}{[k':k]}\delta^{\log}_{k'/k}=2/2=1$. We conclude that the different increases with slope $\frac{1}{d(x'_0,y')}=6$ from $x'_0$ to $y'$.

 Alternatively, we could have obtained the previous slope computation by using the Riemann-Hurwitz formula at either $x'_0$ or $y'$. To wit, the Riemann-Hurwitz formula at $x'_0$ is of the form $$4=-\chi(x'_0/f(x'_0))=\frac{\partial_{[x_0',y')}\delta}{2}+2-1,$$ and the RH formula at $y'$ is of the form $$2=-\chi(y'/f(y'))=\left(\frac{\partial_{[y',x_0')}\delta}{6}+2-1\right)+2\left(0+2-1\right).$$
\end{example}

\begin{proof}[Proof of Theorem \ref{thm: ell curve}] Let $E$ be an elliptic curve over $k$ with bad reduction and potentially multiplicative reduction. 
It is well-known that $\nu>0$ if and only if $E$ is potentially multiplicative.
The fact that the monodromy extension is of degree $2$ follows from the theory of Tate uniformisation \cite[V]{silverman advanced topics} as follows. The Tate elliptic curve $E':y^2+xy=x^3+ax+b$ where $(a,b)=(-36/(j(E)-1728),-1/(j(E)-1728))$ is semistable of reduction type $I_{\nu}$ and $j(E)=j(E')$. Since $j\ne 0,12^3$ it follows that $E$ is a quadratic twist of $E'$ and so there exists a degree $2$ extension $k'/k$ such that $(E')_{k'}\cong E_{k}$, and $k'/k$ is the required monodromy extension. The reduction type of $E_{k'}$ is $I_{2\nu}$.

The essential skeleton $\sk(E_{k'})$ coincides with the minimal skeleton of $E_{k'}$ and is homeomorphic to a circle. The fibers of $\pi^{\an}$ are either 2-to-1 (topological splitting) or 1-to-1 (topological ramification). Moreover the topological splitting locus is open since it is the locus where $\pi^{\an}$ is a local isomorphism. Since the image of $\sk(E_{k'})$ in $E^{\an}$ is contractible, it follows that at least one point, and therefore some non-trivial subinterval $I$ of $sk(E_{k'})$ lies in the splitting locus. 

Let $\omega'$ be the image of $\omega=\omega_E$ in $H^0(E_{k'},\omega_{E_{k'}/k})\cong H^0(E,\omega_{E/k})\otimes k'$. Then $\omega'$ is an invariant differential of $E_{k'}$ and by \cite[8.13]{W} we have $$\wt_{\omega_{E'}}(x)=\wt_{\omega}(\pi(x))$$ for all $x\in I$.  Thus $\wt_{\omega}$ is constant on $\pi^{\an}(I)$. By \cite[\S4.4.5]{MN} the weight function $\wt_{\omega}$ strictly increases away from its minimum locus $\sk(E)$. So by inspection of Table \ref{kodaira neron} and the fact that $\pi^{\an}$ has finite fibers it follows that $E^{\an}$ contains an interval on which $\wt_{\omega}$ is constant. Again by inspection of Table \ref{kodaira neron} it follows that $E$ has reduction type $I_n^*$ for some $n\in\Z_{>0}$ because this is the only non-semistable type for which $\sk(E)$ contains an interval.

We now show that $n=4\delta^{\log}_{k'/k}+\nu$. Let $\Gamma=\sk(\mathscr{E}_{\min})$ denote the minimal skeleton and let $\Gamma'\to\Gamma$ denote the normalised base change of $\Gamma$ to $k'/k$. By the above we know that $\Gamma'$ contains the minimal skeleton of $E_{k'}$. In Figure \ref{figure proof pot mult} below we depict $\Gamma'\to\Gamma$.

\refstepcounter{equation}
 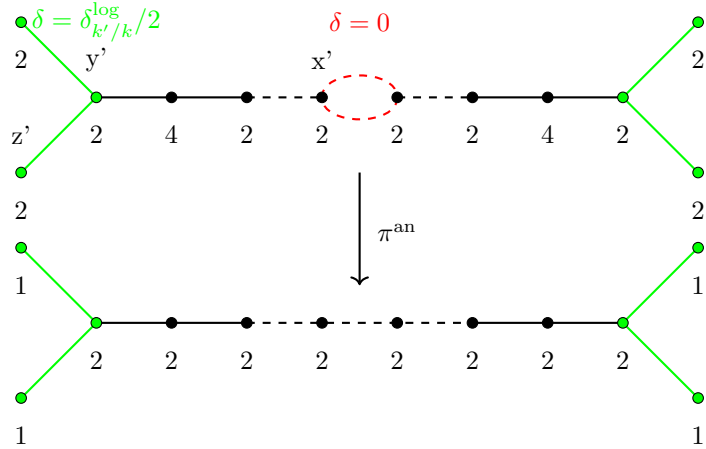
\begin{figure}[ht!]\label{figure proof pot mult}
\centering
	\begin{tikzpicture}

\draw[thick,->] (0.5,2)--(0.5,0.5);
\node at (1,1.2) {$\pi^{\an}$};

\node[green] at (-3,4) {$\delta=\delta^{\log}_{k'/k}/2$};
\node[red] at (0.5,4) {$\delta=0$};

\node at (-4,3.5) {$2$};
\node at (-4,1.5) {$2$};
\node at (-3,2.5) {$2$};
\node at (-2,2.5) {$4$};
\node at (-1,2.5) {$2$};
\node at (-4,2.5) {z'};
\node at (-3,3.5) {y'};
\node at (0,3.5) {x'};
\node at (0,2.5) {$2$};
\node at (1,2.5) {$2$};
\node at (2,2.5) {$2$};
\node at (3,2.5) {$4$};
\node at (4,2.5) {$2$};
\node at (5,1.5) {$2$};
\node at (5,3.5) {$2$};

\draw[thick] (-3,3) -- (-1,3);
\draw[thick,dashed] (-1,3) -- (-0,3);
\draw[thick,dashed] (1,3) -- (2,3);
\draw[thick] (2,3) -- (4,3);
 \draw[red,thick,dashed] (0,3) to [out=90,in=90] (1,3);
 \draw[red,thick,dashed] (1,3) to [in=-90,out=-90] (0,3); 
\draw[green,thick] (5,4) -- (4,3)-- (5,2);
\draw[green, thick] (-4,4) -- (-3,3) --(-4,2);

\node at (-4,0.5) {$1$};
\node at (-4,-1.5) {$1$};
\node at (-3,-0.5) {$2$};
\node at (-2,-0.5) {$2$};

\node at (-1,-0.5) {$2$};
\node at (0,-0.5) {$2$};
\node at (1,-0.5) {$2$};
\node at (2,-0.5) {$2$};
\node at (3,-0.5) {$2$};
\node at (4,-0.5) {$2$};
\node at (5,-1.5) {$1$};
\node at (5,0.5) {$1$};
\draw[thick] (-3,0) -- (-1,0);
\draw[thick] (4,0) -- (2,0);
\draw[thick,dashed] (-1,0) -- (2,0);
\draw[green,thick] (5,1) -- (4,0)-- (5,-1);
\draw[green,thick] (-4,1) -- (-3,0) --(-4,-1);

\draw[fill=green] (-4,4) circle (2pt);
\draw[fill=green] (-4,2) circle (2pt);
\draw[fill=green] (-3,3) circle (2pt);
\draw[fill=black] (-2,3) circle (2pt);
\draw[fill=black] (-1,3) circle (2pt);
\draw[fill=black] (0,3) circle (2pt);
\draw[fill=black] (1,3) circle (2pt);
\draw[fill=black] (2,3) circle (2pt);
\draw[fill=black] (3,3) circle (2pt);
\draw[fill=green] (4,3) circle (2pt);
\draw[fill=green] (5,4) circle (2pt);
\draw[fill=green] (5,2) circle (2pt);

\draw[fill=green] (-4,1) circle (2pt);
\draw[fill=green] (-4,-1) circle (2pt);
\draw[fill=green] (-3,0) circle (2pt);
\draw[fill=black] (-2,0) circle (2pt);
\draw[fill=black] (-1,0) circle (2pt);
\draw[fill=black] (0,0) circle (2pt);
\draw[fill=black] (1,0) circle (2pt);
\draw[fill=black] (2,0) circle (2pt);
\draw[fill=black] (3,0) circle (2pt);
\draw[fill=green] (4,0) circle (2pt);
\draw[fill=green] (5,1) circle (2pt);
\draw[fill=green] (5,-1) circle (2pt);

	\end{tikzpicture}
\caption{\small \emph{Picture of $\Gamma'\to\Gamma$. The markings are the multiplicities of the associated divisorial points. The different $\delta=\delta_f$ varies linearly with slope 2 from {\color{red}$\delta=0$} on split locus} to {\color{green}$\delta=\delta_{k'/k}^{\log}$} above $E^{\mathrm{temp}}$}.
\end{figure}

We claim that $\Gamma'$ is a skeleton. 
In order to verify the criterion of Theorem \ref{thm: enlarging skeleta} it is enough to check that all the leaves and nodes of $\Gamma'$ have multiplicity $1$ (over $k'$). This is proven similarly as in Example \ref{example II} above: the multiplicities of the leaves over $k$ of $\Gamma'$ are $2$ since their projections to $\Gamma$ are tame. Let $y$ be a node of $\Gamma$, and let $y'$ denote its base change. Let $x'$ be the neighbouring node of $\Gamma'$ which also lies on the minimal skeleton (also see the annotations in Figure \ref{figure proof pot mult}). Let $z'$ be a leaf of $\Gamma'$ neighbouring to $y'$. Then $[x',z']$ is neat by \ref{specialisation neat}. By \ref{balancing} it follows that $d(y',z')=d(y,z)/2$, and it follows that $y'$ has multiplicity $2$ by \ref{nc interval multiplicities}.
This means the result of \ref{thm: enlarging skeleta} applies and $\Gamma'\to\Gamma$ is a simultaneous skeleton. 

 A computation with the Riemann-Hurwitz formula (\ref{rh}) at either $x'$ or $y'$, similarly as in Example \ref{example II}, implies that the different increases linearly with slope $2$ from $\delta(x')=0$ to $\delta(y')$. Moreover we have $\delta(y')=\delta^{\log}_{k'/k}/2$ by \ref{constant along temperate part} and so $d(x',y')=\delta^{\log}_{k'/k}/4$. By Proposition \ref{balancing} the metric contracts by a factor of $2$ away from the splitting locus, and $\pi^{\an}$ is an isometry on the splitting locus. In particular $d(x,y)=2d(x',y')=\delta_{k'/k}^{\log}/2$. We now obtain the desired formula via computing the distance between the nodes of $\Gamma$ in two ways:  $$\frac{n}{4}=2d(x,y)+\frac{\nu}{4}=\delta_{k'/k}^{\log}+\frac{\nu}{4}.$$
\end{proof}
\begin{block} In fact, the result of Theorem \ref{thm: ell curve} remains true in the case $\nu=0$, provided one changes ``potentially multiplicative'' to ``potentially \emph{ordinary}'', see Theorem \ref{main2} below. This result is also shown by an explicit computation in \cite[4.2]{L13}.
	\end{block}
	\begin{block}[Slopes of the different]
		To conclude this section, let us remark that in all examples above the different function $\delta$ had 
		either slopes $0,2$ or $6$ on the part where $\delta$ is not trivialised.  Moreover, in the potentially multiplicative and potentially ordinary case only slopes $0,2$ occur. We propose the guiding principle that $\delta$ ``tends'' to behave more complicated, that is with larger slopes, as the potential reduction type becomes ``more supersingular''. Also see \cite[Theorem 7.2.7]{CTT} for a similar phenomenon. 
	\end{block}


 \section{$p$-cyclic arithmetic surface quotient singularities}
\begin{block}[Assumptions]\label{setup obuswewers}
 Let $C$ be as in \ref{notation}. If $g(C)=1$ we assume $C(k)\ne\emptyset$. \emph{In this section, we additionally assume that $C/k$ has \emph{bad} reduction, and obtains \emph{good} reduction over a Galois extension $k'/k$ of degree $p$.} 
\end{block}
\begin{block}[Notation] \label{notation ow} In this section we largely follow the notation of \cite{L14}. We write $\sigma$ for a generator of $\Gal(k'/k)$, and we write $\mathscr{Y}/(k')^{\circ}$ for the smooth model of $C'=C\otimes_k{k'}$. We write $x'$ for the unique vertex of $V(\mathscr{Y})$, this is the divisorial point corresponding to $\mathscr{Y}_s$ and we let $x=\pi^{\an}(x')$ where $\pi^{\an}:C_{k'}^{\an}\to C^{\an}$ denotes the projection.

Note that $\Gal(k'/k)=\langle \sigma \rangle$ acts on both $\mathscr{Y}$ and $\mathscr{Y}_s$. 
Consider the $k^{\circ}$-model $\mathscr{Z}=\mathscr{Y}/\langle \sigma\rangle$ of $C$. The induced map $\mathscr{Y}_s/\langle \sigma\rangle\to\mathscr{Z}_s$ is the normalisation. The ramified $\langle\sigma\rangle$-Galois cover $\mathscr{Y}_s\to\mathscr{Y}_s/\langle \sigma\rangle$ coincides with the map of residual curves $C_{x'}\to C_{x}$. Let $P_1,\dots,P_d\in C_{x'}$ be the ramification points, and denote the images by $Q_1,\dots,Q_d\in C_{x}$. Clearly every singular point of $\mathscr{Z}$ is one of the points $Q_1,\dots,Q_d$, and conversely all these points are singular, by Zariski-Nagata purity of the branch locus (and automatic flatness above the regular locus).
\end{block}
\begin{block}[The $p$-rank of $C_x$]\label{p-rank}
The classical Riemann-Hurwitz formula implies that $$2g(x')-2=p(2g(x)-2)+d(p-1)+\sum_{1\le i\le d}\delta^{\log}_{\O_{P_i}/\O_{Q_i}}$$ and moreover $\delta^{\log}_{\O_{P_i}/\O_{Q_i}}=j_i(p-1)$ where $j_i\ge 1$ denotes the unique jump of the ramification filtration of $\O_{P_i}/\O_{Q_i}$. We call the point $P_i$ \emph{weakly wildly ramified} in case $j_i=1$, that is the jump is as small as possible. 

Let $\gamma(x)$ denote the \emph{$p$-rank of $C_x$}, defined as the $\widetilde{k}$-dimension of $\mathrm{Pic}_{C_x/\widetilde{k}}[p](\widetilde{k})$. Recall that $\gamma(x)\le g(x)$, and we call $C_x$ \emph{ordinary} if equality holds. 
The Deuring-Shafarevich-Crew formula \cite[1.8]{crew} implies that $$2\gamma(C_{x'})-2=p(2\gamma(C_x)-2)+2d(p-1).$$

A computation using the previous formulae shows that $C_x$ is ordinary if and only if $C_{x'}$ is ordinary, and in that case $C_x\to C_{x'}$ is \emph{weakly wildly ramified} at all points, that is $j_1=\dots=j_d=1$. 

For the remainder, we focus on one pair $(P_i,Q_i)=(P,Q)$, and let $\CC$ denote a \emph{snc} resolution of $\mathscr{Z}$. We write $\Gamma=\Gamma_Q$ for the dual graph of $\Gamma$, and we let $\Gamma_Q$ denote the connected component of $\Gamma\setminus \{x\}$ corresponding to $Q$.
\end{block}
\label{sec:quotient}

\begin{proof}[Proof of Theorem \ref{thm: combinatorial slope}] Let $\phi:\Gamma'=(\pi^{\an})^{-1}\Gamma\to \Gamma$ be the base change map. By a theorem of Abhyankar, see \cite[\S12]{K94} and the references cited there, after possibly replacing $\CC$ by a series of blowups in closed points of $\CC$, we may assume that $\Gamma'$ is a skeleton, that is $\Gamma'\to\Gamma$ is a simultaneous skeleton. Note that the value of $\chi(\Gamma_Q)$ does not change after blowing up $\CC$ in a closed point.

Write $b'$ for the branch of $x'$ corresponding to $P$. By Hilbert's formula (as in \ref{example II}) and Proposition \ref{eqn: comparison rh} and it follows that $$(p-1)j_Q=\delta^{\log}_{\O_{P}/\O_{Q}}=\frac{1}{m(x')}\partial_{b'}\delta.$$ Let $\Gamma'_{Q}$ denotes the base change of $\Gamma_{Q}$ to $k'$, then because $\Gamma'_{Q}$ is obtained by a series of blowups as in \ref{blowup sequence}, we obtain by invariance of $\chi(\cdot)$ that $\chi(\Gamma'_{Q})=m(x')=p$. In the Riemann-Hurwitz formula \ref{rh}, we now sum the coefficients of $\Delta(\delta)$ which belong to $\Gamma_Q$. All but one slope come in pairs of cancelling slopes, so we obtain that $$\partial_{b'}\delta=\chi(\Gamma'_{Q})-\deg(\pi)\chi(\Gamma_{Q})=p-p\chi(\Gamma_{Q}).$$ 
\end{proof}
\begin{example}
	In the setting of Example \ref{example II}, we have $p=2$, $d=1$, and with the notation of Figure \ref{figure example II} we compute that $$\chi(\Gamma_{Q})=-m(y)+m(z_1)+m(z_2)=-6+1+3=-2.$$ Note that $C_{x_0'}\to C_{x_0}$ has a unique ramification point $P_1$, corresponding to the branch represented by $[x_0',y_0']$. Theorem \ref{thm: combinatorial slope} then predicts that the unique ramification jump at $P_1$ equals $3$.
\end{example}
\begin{block}[Weakly ramified case]\label{weakly ramified case} Now suppose that $j_Q=1$, i.e. $\O_{P}/\O_{Q}$ is weakly wildly ramified. By Theorem \ref{thm: combinatorial slope} it follows that $\chi(\Gamma_{Q})=2-p$. By \cite[5.3 and 4.3]{L14} the graph $\Gamma_{Q}$ is as depicted in Figure \ref{figure obus wewers} below.
\end{block}
\refstepcounter{equation}
\begin{figure}[ht!]\label{figure obus wewers}
\centering
	\begin{tikzpicture}
		\draw (0,0) circle (2pt);
\draw[fill=black] (1,0) circle (2pt);
\draw[fill=black] (2,0) circle (2pt);
\draw[fill=black] (3,0) circle (2pt);
\draw[fill=black] (4,1) circle (2pt);
\draw[fill=black] (4,-1) circle (2pt);
\draw[fill=black] (5,1) circle (2pt);
\draw[fill=black] (5,-1) circle (2pt);

\node at (0,0.5) {$x$};
\node at (0,-0.5) {$p$};
\node at (1,-0.5) {$p$};
\node at (2,-0.5) {$p$};
\node at (3,-0.5) {$p$};
\node at (3,0.5) {$y$};
\node at (4,-1.5) {$r$};
\node at (5,-1.5) {$1$};
\node at (5.5,-1.25) {$w$};
\node at (4,0.5) {$p-r$};
\node at (5,0.5) {$1$};
\node at (5.5,0.75) {$z$};
\draw[thick] (0,0) -- (1,0);
\draw[thick] (2,0) -- (3,0)--(4,1);
\draw[thick] (3,0) -- (4,-1);
\draw[dashed] (1,0) -- (2,0);
\draw[dashed] (4,-1) -- (5,-1);
\draw[dashed] (4,1) -- (5,1);
	\end{tikzpicture}
\caption{\small \emph{The dual graph $\Gamma_{Q}$ as in \ref{weakly ramified case}. The markings indicate the multiplicities of the corresponding components, and $r>0$ is some integer smaller than $p$. The graph $\Gamma_{Q}$ has a unique node $y$, and the points of $V(\mathscr{X})$ on the segment $[x,y]$ each have multiplicity $p$.}}
\end{figure}
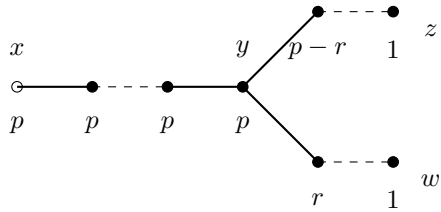

 \begin{block}[Ordinary case]\label{ordinary case}
 	 Suppose that $C$ has ordinary good reduction over $k'$, by \ref{p-rank} this is equivalent to $\mathscr{Y}_s\to \mathscr{Y}_s/\langle \sigma\rangle$ being everywhere weakly wildly ramified. Let $\Gamma=\sk(\mathscr{X})$. It follows that $\Gamma\setminus \{x\}$ is a disjoint union of $d$ copies of the graphs depicted in \ref{figure obus wewers}. Fix one such graph $\Gamma_Q$. 
 \end{block}
\begin{theorem}[Obus-Wewers] \label{main2} Sitation of \ref{ordinary case}. Then the number of vertices of $V(\mathscr{X})$ on the segment $[x,y]$ is equals $jp$, where $j$ is the unique ramification jump of $k'/k$. 
\end{theorem}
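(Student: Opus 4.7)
The plan is to mirror the argument of Theorem~\ref{thm: combinatorial slope}, now extracting the chain length by analyzing the different function $\delta$ along the segment $[x_0',y']\subset\Gamma'$, where $x_0'\in V(\mathscr{Y})$ corresponds to $\mathscr{Y}_s$ and $y'$ is the unique preimage of the node $y$. First I would invoke Abhyankar's theorem (cf.~\cite[\S12]{K94}), as in Theorem~\ref{thm: combinatorial slope}, to reduce to the case that $\phi:\Gamma_Q'\to\Gamma_Q$ is a simultaneous skeleton; the count of multiplicity-$p$ vertices of $V(\mathscr{X})$ on $[x,y]$ is unaffected by such blowups.

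I would then determine $\delta$ on $[x_0',y']$ from the following three ingredients:
\begin{enumerate}[label=(\roman*)]
\item $\delta(x_0')=0$: the DVR extension $\mathscr{O}_{\mathscr{Y},\mathscr{Y}_s}/\mathscr{O}_{\mathscr{Z},\mathscr{Z}_s}$ is unramified, because under the ordinary hypothesis the cover $\mathscr{Y}_s\to\mathscr{Z}_s$ is generically separable of degree $p$ with ramification index $1$, so $\delta^{\log}=0$.
\item $\delta(y')=\tfrac{(p-1)j}{p}$: the node $y$ lies in the temperate part of $\Gamma$, because tame divisorial points accumulate at $y$ along the two tame branches of $\Gamma_Q$ whose neighboring vertices have multiplicities $r$ and $p-r$ coprime to $p$; Proposition~\ref{constant along temperate part} together with Hilbert's formula $\delta^{\log}_{k'/k}=(p-1)j$ yields the stated value.
\item The outgoing slope of $\delta$ at $x_0'$ into $\Gamma_Q'$ is $p(p-1)$: by Proposition~\ref{comparison RH}, $\partial_{b'}\delta=m(x_0')(p-1)j_Q=p(p-1)$, using $m(x_0')=p$ and the weakly wild hypothesis $j_Q=1$.
\end{enumerate}
The key step would be to show that $\delta$ has constant slope $p(p-1)$ along all of $[x_0',y']$. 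I would argue that at each interior chain vertex $v'$ of the simultaneous skeleton, the residual cover $C_{v'}\to C_{\phi(v')}\cong\mathbb{P}^1$ is again a weakly wild $p$-cyclic cover of $\mathbb{P}^1$ ramified precisely at the two chain branches with jump~$1$ at each, so that by Proposition~\ref{comparison RH} both outgoing slopes at $v'$ equal $p(p-1)$ and hence $\Delta(\delta)(v')=0$. I expect this linearity to be the main obstacle, as it requires identifying the genus and ramification structure of the residual covers appearing along the resolution chain of the weakly wild $p$-cyclic quotient.

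With linearity granted, the computation closes quickly: $\tfrac{(p-1)j}{p}=p(p-1)\,d_{\Gamma'}(x_0',y')$ gives $d_{\Gamma'}(x_0',y')=j/p^2$. Since the entire path $[x_0,y]$ lies in the topologically ramified locus, Proposition~\ref{balancing} yields $d_\Gamma(x_0,y)=p\cdot d_{\Gamma'}(x_0',y')=j/p$. Letting $N$ denote the number of multiplicity-$p$ vertices of $V(\mathscr{X})$ on $[x,y]$, the path from $x_0$ to $y$ in $\Gamma$ passes through these $N$ vertices plus $x_0$ itself and consists of $N$ consecutive edges of length $1/p^2$ (all endpoints having multiplicity $p$), so $d_\Gamma(x_0,y)=N/p^2$. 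Equating yields $N=jp$, as asserted.
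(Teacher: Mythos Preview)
Your overall strategy matches the paper's: endpoint values of $\delta$, initial slope at $x_0'$, linearity on $[x_0',y']$, then a length count. The arithmetic at the end is correct. There are two divergences worth flagging.

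First, you invoke Abhyankar's theorem, whereas the paper verifies directly that the normalised base change $\Gamma'$ of the Lorenzini graph (Figure~\ref{figure obus wewers}) is already a skeleton, by checking the criterion of Theorem~\ref{thm: enlarging skeleta}: the multiplicities of the leaves $w',z'$ and of the node $y'$ are computed via \ref{specialisation neat} and Lemma~\ref{nc interval multiplicities}, and then $\chi(\Gamma')=\chi(C')$ is read off. This is precisely where the global ordinary hypothesis enters, since one needs the weakly wild structure of Figure~\ref{figure obus wewers} at \emph{every} $Q_i$ in order to compute $\chi(\Gamma')$. Your Abhyankar detour is not incorrect, but it is unnecessary here and obscures the role of the ordinariness assumption.

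Second, and this is a genuine error, your argument for linearity does not work as written. You assert that if the residual cover at an interior $v'$ is weakly wild with jump~$1$ at both chain branches, then by Proposition~\ref{comparison RH} both outgoing slopes equal $p(p-1)$ and hence $\Delta(\delta)(v')=0$. But $\Delta(\delta)(v')$ is the \emph{sum} of outgoing slopes, so your hypothesis would yield $\Delta(\delta)(v')=2m(v')(p-1)>0$, not zero; more fundamentally, Proposition~\ref{comparison RH} only ever outputs nonnegative values, so it cannot by itself produce the negative outgoing slope required for $\delta$ to pass linearly through $v'$. The correct and much simpler argument is the one the paper uses: since $C'$ has good reduction, every type~$2$ point of $(C')^{\an}$ other than $x_0'$ has genus zero (any regular model of $C'$ is a sequence of point blowups of $\mathscr{Y}$, with rational exceptional components), so each interior $v'\in(x_0',y')$ has $\chi_{\Gamma'}(v')=0$; likewise $\chi_{\Gamma}(\phi(v'))=0$ because the chain vertices of Figure~\ref{figure obus wewers} are rational of valency~$2$. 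Then Proposition~\ref{rh}(3) gives $\Delta(\delta)(v')=0$ immediately. This dissolves your ``main obstacle'' without any analysis of the residual covers along the chain.
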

\begin{proof} First we verify that $\Gamma'\to\Gamma$ is a simultaneous skeleton of $\pi^{\an}:C_{k'}^{\an}\to C^{\an}$, by verifying the criterion of Theorem \ref{thm: enlarging skeleta}. Let $w$ and $z$ denote the leaves of $\Gamma_{Q}$. Then $w$ and $z$ each have a unique preimage $w'$ and $z'$ in $\Gamma_{P}=\Gamma_{Q}'$ and $m(w')=m(z')=p$. Since $C_{k'}^{\an}$ is contractible the topological ramification locus is connected. It follows that $y$ also has a unique preimage $y'$. The interval $[x',z']$ is neat by \ref{specialisation neat}. By \ref{balancing} it follows that $d(y',z')=d(y,z)/p$, and it follows that $y'$ has multiplicity $p$ by \ref{nc interval multiplicities}. It follows that $\chi(\Gamma')=(\chi(x')-d)+d=\chi(x')=\chi(C')$ and so the criterion of Theorem \ref{thm: enlarging skeleta} is met. Therefore $\Gamma'\to\Gamma$ is a simultaneous skeleton. 

Note that $\delta(x')=0$ since $\H(x')/\H(x)$ is unramified, and $\delta(y')=\frac{1}{p}\delta_{k'/k}^{\log}$ by \ref{constant along temperate part}. By the proof of Theorem \ref{thm: combinatorial slope} it follows that $\partial_{b'}\delta=p(p-1)$, where $b'$ is the germ of the interval $[x',y']$. By Proposition \ref{rh} the different is linear on $[x',y']$ and so it follows $d(x',y')=\delta_{k'/k}^{\log}/(p-1)=j$, where the last equality follows from Hilbert's formula. Hence $d(x,y)=j/p$ by Theorem \ref{balancing}. The distance between any two neighbouring points of $V(\mathscr{X})\cap[x,y]$ is $1/p^2$. Therefore there are indeed $\frac{j/p}{1/p^2}=jp$ vertices on $[x,y]$.
\end{proof}
\begin{remark} Obus-Wewers \cite{OW} actually prove a finer result, namely they only need to assume that $j_Q=1$ (weak wild ramification at $P$) to conclude that the resolution graph has the desired shape. It is plausible that a similar strategy exists to obtain this result, but we do not know how to extend \ref{thm: enlarging skeleta} in this setting.
\end{remark}

\end{document}